\title{Quantum $K$-theory and  $q$-Difference equations}
\author[1]{Yongbin Ruan}
\author[2]{Yaoxiong Wen}
\affil[1]{Institute of Advanced Study for Mathematics, Zhejiang University. \authorcr Email: ruanyb@zju.edu.cn}
\affil[2]{Beijing International Center For Mathematical Research, Peking University. \authorcr Email: y.x.wen.math@gmail.com}
\date{}
\newtheorem{Theorem}{Theorem}[section]
\newtheorem{Lemma}{Lemma}[section]
\newtheorem{Proposition}{Proposition}[section]
\newtheorem{Definition}{Definition}[section]
\newtheorem{Example}{Example}[section] 
\newtheorem{Remark}{Remark}[section]
\newenvironment{proof}{{\noindent\it Proof}\quad}{\hfill $\square$\par}
\begin{document}
\maketitle
\begin{abstract}
This is a set of lecture notes for the first author's lectures on the difference equations in 2019 at the Institute of Advanced Study for Mathematics at Zhejiang University. We focus on explicit computations and examples. The convergence of local solutions is discussed.
\end{abstract}
\tableofcontents
\clearpage

\section{Introduction}
The linear differential equation appears naturally in quantum cohomology and mirror symmetry as the Picard-Fuch equation of period integral. It has been studied extensively throughout history. Its close cousin-difference equation is an ancient topic of mathematics as old as the linear differential equation. However, it did not receive nearly enough attention. During the last decade, it appears as the analogous of Picard-Fuch equation in quantum $K$-theory. The latter enjoys a revival due to the recent realization that quantum $K$-theory is a 3d TQFT. It was well-known that quantum cohomology is a 2d TQFT. 

Compared to its more famous cousin, there is a lack of literature on difference equations, which slows down the development of the subject. This set of notes is an attempt to improve the situation. Instead of developing the general theory, we focus on the explicit computation of the solutions. We claimed no originality of material and made no attempt to complete references, for which the author apologize.
	
\subsection{Linear difference equation}

Our main consideration is following equation
The equation
\begin{align}
\sum_{i=0}^n a_i(z)f(q^iz) = 0,	
\end{align}
in which the $a_i(z)$ are known functions of the complex variable $z, q$  and $|q| > 1$, is called the homogeneous linear ordinary $q$-difference equation of the $n$-th order. 

In early 2000, Y.P. Lee \cite{Lee} and Givental \cite{Givental} introduced the $K$-theoretic Gromov--Witten theorem, in which the $q$-difference equations play the role of Picard-Fuch equation in quantum cohomology. More precisely, the quantum $K$-theoretic $I$-function  satisfy certain $q$-difference equations.

For example, let's consider projective space $\mathbb{P}^N$, the modified $K$-theoretic $I$-function is of the form
\begin{align}
\widetilde{I^K_{\mathbb{P}^N}} = 	P^{\ell_q(z)}\sum_{d=0}^{\infty} \frac{z^d}{\prod_{k=1}^{d} (1-Pq^k)^{N+1} } ,
\end{align}
where $P=\mathcal{O}(-1)$ on $\mathbb{P}^N$. Denoted by $\sigma_q:=q^{z\partial_z}$ the difference operator shifting $z^k$ by $q^kz^k$. Since $(1-P)^{N+1}=0$, then the $K$-theoretic $I$-function of $\mathbb{P}^N$ satisfies the following degree $N+1$ difference equation
\begin{align}
\left[ (1-\sigma_q)^{N+1} -z  \right] \widetilde{I^{K}_{\mathbb{P}^N}} = 0.	
\end{align}

In 2018, Ruan-Zhang \cite{RZ} introduced a key new feature in quantum $K$-theory, i.e., the level structure, which is now well-understood to correspond to Chern-Simons term in so called 3d $\mathcal{N}=2$ theory  in physics\cite{HP20,UY20} . It plays an essential role in 3d theories \cite{RWZ} and has effects on the difference equation as follows.

Still taking projective space $\mathbb{P}^N$ as an example, let's consider the level structure with respect to standard representation of $\mathbb{C}^*$ of level $l$, see \cite{RZ} for details. Then the modified $I$-function with level structure is
\begin{align} \label{I-func-with-level}
\widetilde{I^{K, l}_{\mathbb{P}^N}} = 	P^{\ell_q(z)}\sum_{d=0}^{\infty} \frac{\left(P^{d}q^{\frac{d(d-1)}{2}} \right)^l z^d}{\prod_{k=1}^{d} (1-Pq^k)^{N+1} },
\end{align}
and it satisfies 
\begin{align}\label{dif-I-level}
\left[ (1-\sigma_q)^{N+1} -z  \sigma_q^l  \right] \widetilde{I^{K,St}_{\mathbb{P}^N}} = 0.	
\end{align}

Moreover, we could consider hypersurfaces inside projective space. Among them, the quintic 3-fold plays an essential role. Let's denote the quintic 3-fold by $X$, which could be realized as a degree 5 hypersurface in the projective space $\mathbb{P}^4$. By using the quantum Lefschetz hyperplane theorem \cite{Givental15}, we have
\begin{align}
\widetilde{I^K_X}(z,q)= P^{\ell_q(z)}\sum_{d=0}^{\infty} \frac{\prod_{k=1}^{5d}(1-P^5q^k)}{\prod_{k=1}^{d}(1-Pq^k)^5} z^d \label{K-I-function},
\end{align}
where we still use $P$ to denote $\mathcal{O}(-1)$ on $\mathbb{P}^4$. Since $(1-P)^5=0$ in $K(\mathbb{P}^4)$, then (\ref{K-I-function}) satisfies the following difference equation
\begin{align} 
\left[	(1-\sigma_q)^5 - z \prod_{k=1}^5 (1-q^{k}\sigma_q^5) \right] \widetilde{I^K_X}(q,z)=0, \label{quintic}    
\end{align}
It's a degree 25 difference equation! 

Note that the difference equation (\ref{dif-I-level}) behaves well when level $0 \leq l \leq N+1$, in these cases, the difference equations are regular singular (Definition \ref{reg-sing}). And for difference equations (\ref{dif-I-level}) with $l \geq N+1$ and the difference equation (\ref{quintic}), they are called irregular singular (Definition \ref{reg-sing}), their solutions are not as good as regular singular cases.

Finding the difference equations that $I$-functions satisfy is significant. One formulation of 3d mirror symmetry is to interchange the quantum/equivariant parameters of mirror pair \cite{AHKT, DT, RWZ}. Usually, $I$-functions are very complicated, and it is difficult to do it directly. One often accomplishes it by analyzing its q-difference equation.

In this paper, our primary goal is to find solutions of $q$-difference equations via the Frobenius method. We will use many concrete examples to demonstrate how it works in both regular singular cases and irregular singular cases. We focus on computations rather than general results. Among these examples, we will see a lot of modular forms! The paper is organized as follows. Section 2 reviews some basic definitions of difference equations and some general results about regular singular cases. In section 3, we introduce the Frobenius method and apply it to some examples. We end this section with a discussion about the convergence of the solutions. We will show that under certain conditions, all the power-series solutions in regular singular cases are convergent. Section 4 deals with irregular singular cases; we start by showing how the general technique works and then apply it to some examples, including the difference equations for the quintic and projective space with level structures. We also end up with a discussion about the convergence of the specific solutions in the irregular cases.

\subsection{Acknowledgements}
These lectures were held at the Institute for Advanced Study in Mathematics at Zhejiang University. We express our special thanks to the institute for its wonderful environment and support. The second author would like to thank Prof. Bohan Fang, Prof. Huijun Fan, and Prof. Shuai Guo for their helpful support during the visit.

\section{A brief review of general theory}
In this section, we review some basic  in the theory of $q$-difference equations. The main references are \cite{hardouin:hal-01959879}, \cite{2019arXiv191100254R} and \cite{Sauloy00}.  
\vspace{0.3cm}

\noindent {\bf{Notations}}. Here are some standard notations of general use:
\begin{itemize}
\item[--] $z$ and $q$ are complex variables and $|q|>1$,
\item[--] $\mathbb{C}(z)$ is the field of rational fractions over $\mathbb{C}$,
\item[--] $\mathbb{C}(\{z\})$ is the field of meromorphic germs at 0, is the quotient field of $\mathbb{C} \{ z \}$,
\item[--] $\mathbb{C}(( z ))$ is the field of Laurent formal power series, is the quotient field of $\mathbb{C}[[z]]$,
\item[--] $\mathcal{M}(\mathbb{C})$ is the field of meromorphic functions on $\mathbb{C}$,
\item[--] $\mathcal{M}(\mathbb{C}^*)$ is the field of meromorphic functions on $\mathbb{C}^*$,
\item[--] $\mathcal{M}(\mathbb{C}^*,0)=\mathbb{C}\{z , z^{-1}\}$ is the space of convergent Laurent series defined on a punctured disk at $z=0$,
\item[--] $\mathcal{M}\left(\mathbb{E}_{q}\right)$ is the field of meromorphic functions on elliptic curve $\mathbb{E}_q=\mathbb{C}^*/\{q^n \mathbb{C}^*, n\in \mathbb{Z}\}$, i.e, the field of
elliptic functions.
\end{itemize}

\begin{Definition}
A difference field is a pair $(K,{\sigma})$, where $K$ is a field and $\sigma$ is a field automorphism of $K$.	
\end{Definition}

\begin{Example}
We will focus on the fields in the above notations, 
\begin{align}
\mathbb{C}(z) \subset \mathcal{M}(\mathbb{C}) \subset \mathcal{M}(\mathbb{C},0) = \mathbb{C}(\{ z\})	\ and \ \mathcal{M}(\mathbb{C}^*) \subset \mathcal{M}(\mathbb{C}^*,0),
\end{align}
they are all endowed with the $q$-shift operator $\sigma_q := q^{z \partial_{z}}: f(z) \mapsto f(qz)$.	Usually, we denote the field of constants of the difference field $(K, \sigma_q)$ as $K^{\sigma_q}$. For example, $\mathcal{M}(\mathbb{C}^*)^{\sigma_q}=\mathcal{M}\left(\mathbb{E}_{q}\right)$. This is the main reason that the modular form such as elliptic function
appears naturally in the theory of $q$-difference equation.
\end{Example}

\subsection{Difference equations}
The $q$-difference equation is as follows
\begin{align}
a_n(z,q)(\sigma_q)^nf+	a_{n-1}(z,q)(\sigma_q)^{n-1}f+ \cdots + a_{0}(z,q)f = 0,
\end{align}
with $|q| < 1$ and $a_i(z,q)$ are meromorphic functions. Let $P:=a_n(z,q)(\sigma_q)^n+	a_{n-1}(z,q)(\sigma_q)^{n-1}+ \cdots + a_{0}(z,q)$. Then the vectorialisation trick shows that:
\begin{align}
P \cdot f=0  \Longleftrightarrow \sigma_q X_f = A_P X_f,	
\end{align}
where
\begin{align}
X_f:=\left(\begin{array}{c}
f(z,q) \\
\sigma_q f(z,q) \\
\vdots \\
\left(\sigma_q\right)^{n-1} f(z,q)
\end{array}\right)	\ and \ A_P:= \left(\begin{array}{ccccc}
0 & 1 & 0 & \cdots & 0 \\
\vdots & 0 & \ddots & & 0 \\
0 & \cdots & \cdots & \cdots & 1 \\
-\frac{a_{0}}{a_{n}} & -\frac{a_{1}}{a_{n}} & \cdots & \cdots & -\frac{a_{n-1}}{a_{n}}
\end{array}\right).
\end{align}
For this reason, the study of scalar linear difference equations boils down to that of difference systems. In the following, we focus on the study of difference systems and in the next section, we will focus on the linear difference equations in order to apply the Frobenius method.

\begin{Remark}
We see how to obtain a difference systems from a linear difference equation, in some sense, the converse is true.	
\end{Remark}

\begin{Theorem}[\cite{hardouin:hal-01959879}, Theorem 2.4.8 (Cyclic vector lemma)] 
Every $A \in GL_n(K)$, where $K$ is a field, is equivalent over $K$ to some $A_P$, where $P$ is a difference operator. 
	
\end{Theorem}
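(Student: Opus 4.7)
The plan is to translate the problem into module-theoretic language and then produce a cyclic vector for the associated difference module. Explicitly, view $M = K^n$ as the difference module attached to $A$: declare that the semilinear operator $\Phi$ acts on column vectors by $\Phi(v) = A^{-1}\sigma(v)$, so that in the standard basis the matrix of $\Phi$ (up to the usual convention) is $A$. Two matrices $A, B \in GL_n(K)$ are equivalent, i.e.\ $B = \sigma(T) A T^{-1}$ for some $T \in GL_n(K)$, precisely when they describe the same module $(M,\Phi)$ in different bases. Moreover, $A$ is equivalent to a companion matrix $A_P$ iff $(M,\Phi)$ admits a \emph{cyclic vector}, namely an $e \in M$ with $e, \Phi(e), \ldots, \Phi^{n-1}(e)$ a $K$-basis of $M$: reading off $\Phi^n(e) = -\sum_{i=0}^{n-1}(a_i/a_n)\Phi^i(e)$ then produces the operator $P$ and the change-of-basis matrix gives the required gauge transformation. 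Thus the lemma reduces to showing that every rank-$n$ difference module over $K$ admits a cyclic vector.

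For the existence step I would follow the classical Katz/Deligne-type strategy adapted to the difference setting. Starting from any basis $e_1,\ldots,e_n$ of $M$, consider candidates $v(t) = \sum_{j=1}^{n} p_j(t)\, e_j$ whose coefficients $p_j(t)$ are polynomials in an auxiliary parameter $t$ with entries in $K$, chosen so that specializing $t$ to different values recovers each $e_j$ plus various $\Phi$-shifts. Form the $\sigma$-Wronskian
\begin{equation}
W(t) \;=\; \det\bigl( v(t)\;\big|\;\Phi(v(t))\;\big|\;\cdots\;\big|\;\Phi^{n-1}(v(t)) \bigr) \;\in\; K[t].
\end{equation}
If one shows $W(t) \not\equiv 0$, then any specialization $t = t_0$ in $K$ (or in the constant subfield $K^{\sigma_q}$) avoiding its finitely many zeros yields a cyclic vector $v(t_0) \in M$, and the proof is complete.

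The main obstacle is precisely the non-vanishing of $W(t)$. In the differential case, the Leibniz rule lets one pair up derivatives of different orders and exhibit an explicit nonzero monomial in $W$; here $\sigma$ is multiplicative rather than derivational, so the analogous algebraic identity must be verified by a direct combinatorial analysis of how $\sigma$ permutes and multiplies the family $\{\Phi^i(e_j)\}_{0 \le i \le n-1,\; 1 \le j \le n}$. A convenient way is to pick $p_j(t) = t^{j-1}$ (or a Vandermonde-type ansatz), expand $W(t)$ in a leading-order monomial in $t$, and check that the corresponding coefficient is a nonzero $K$-linear combination of determinants built from the original basis. One also needs the ground field $K$ to contain infinitely many elements avoiding the zeros of $W$, which is automatic for the difference fields of interest in this paper (e.g.\ $K = \mathbb{C}(z)$ or $\mathcal{M}(\mathbb{C}^*)$). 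Granted this non-vanishing, specialization produces the cyclic vector and hence the desired equivalence $A \sim A_P$.
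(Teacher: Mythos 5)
The paper itself gives no proof of this statement---it is quoted verbatim from Hardouin--Sauloy--Singer (Theorem 2.4.8)---so your proposal can only be judged on its own merits. The reduction in your first paragraph is correct and standard: matrix equivalence $B=\sigma(T)AT^{-1}$ is change of basis in the difference module $(M,\Phi)$, and $A\sim A_P$ for some operator $P$ is exactly the existence of a cyclic vector. But that reduction is the routine part. The entire content of the cyclic vector lemma is the non-vanishing of the $\sigma$-Wronskian (Casoratian) $W(t)$, and your proposal explicitly defers it: ``the main obstacle is precisely the non-vanishing of $W(t)$\dots must be verified by a direct combinatorial analysis.'' That analysis is never carried out, so the proof has a genuine gap at its only non-trivial step.

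Moreover, the specific route you sketch cannot work as stated. If you take $p_j(t)=t^{j-1}$ and specialize $t$ in the constant subfield $K^{\sigma}$ (which you must, since $\Phi$ is $\sigma$-semilinear: for $t_0\notin K^\sigma$ the evaluation map $t\mapsto t_0$ does not commute with $\Phi$, so $W(t_0)$ is no longer the Wronskian of $v(t_0)$), then your argument makes no use of the automorphism $\sigma$ being non-trivial. But the lemma is \emph{false} when $\sigma=\mathrm{id}$: for $A=I_n$ with $n\ge 2$, every gauge transform $\sigma(T)I_nT^{-1}=I_n$, which is not a companion matrix; equivalently, the module $(K^n,\mathrm{id})$ has no cyclic vector. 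Hence any purely ``combinatorial'' verification of $W(t)\not\equiv 0$ that ignores $\sigma$ must fail. The correct proofs (Birkhoff; Hendriks--van der Put; HSS) crucially use an element $a\in K$ with $\sigma(a)\neq a$ to build the candidate cyclic vector (typically by induction on $n$, perturbing a vector cyclic for a submodule by multiples of powers of $a$), and the theorem needs the hypothesis that $(K,\sigma)$ is a difference field with $\sigma\neq\mathrm{id}$---a hypothesis the paper's statement also omits, but which your proof would have to invoke explicitly to close the gap.
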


\begin{Definition}
Let $\left(E_{q}\right):\sigma_q X_{q}(z)=A_{q}(z) X_{q}(z)$  be a  $q$-difference system, with $A_{q} \in \mathrm{Mat}_{n \times n}(K)$. We define the solution space of this $q$-difference equation by
\begin{align}
\operatorname{Sol}\left(E_{q}\right)=\left\{X_{q} \in K^{n} \mid \sigma_q X_{q}(z)=A_{q}(z) X_{q}(z)\right\} \label{Sol}.
\end{align}	
\end{Definition}

\begin{Remark}
From now on, we will focus on the local solutions at $z=0$, the results will also hold for $z=\infty$, the reason why we don't consider solutions at other singular points is that: if a function $f(z)$ is a solution of a $q$-difference equation $\sigma_q f(z)=a(z)f(z)$ and has a singularity at some $z_0 \neq 0,\infty$, then $f(z)$ has a singularity at any complex number $z_0q^k$.	
\end{Remark}

\begin{Proposition}[\cite{hardouin:hal-01959879}, Theorem 2.3.1 p.118]
Let $\left(E_{q}\right): \sigma_q X_{q}(z)=A_{q}(z) X_{q}(z)$ be a $q$-difference system. Then, we have
\begin{align}
	\operatorname{dim}_{K^{\sigma_q}}\left(\operatorname{Sol}\left(E_{q}\right)\right) \leq \operatorname{rank}\left(A_{q}\right).
\end{align}
Even more, if we consider solutions in a bigger space, i.e. the extension of the difference field $(K, \sigma_q)$ (is a difference field $(K^{\prime}, \sigma_q^{\prime})$ such that $K \subset K^{\prime}$ and $\sigma_q^{\prime}|_{K} = \sigma_q $), we still have
\begin{align}
	\operatorname{dim}_{(K^{\prime})^{\sigma_q}}\left(\operatorname{Sol}\left(E_{q}, K^{\prime}\right)\right) \leq \operatorname{rank}\left(A_{q}\right).
\end{align}
\end{Proposition}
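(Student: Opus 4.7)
The plan is to reduce the claim to the classical fact that linear dependence over $K$ of vectors in $K^n$ forces the dependence to descend to a dependence with coefficients in the subfield $K^{\sigma_q}$ of constants. The set $\operatorname{Sol}(E_q)$ is manifestly a $K^{\sigma_q}$-vector subspace of $K^n$, since $\sigma_q$ fixes constants and commutes with multiplication by them. Writing $n = \operatorname{rank}(A_q)$, any $n+1$ vectors in $K^n$ are $K$-linearly dependent; so the content of the proposition is that a $K$-linear dependence among solutions can always be upgraded to a $K^{\sigma_q}$-linear one.

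For this upgrading I would argue by a minimal-length relation. Suppose $X_1,\dots,X_m$ are solutions admitting a nontrivial relation $\sum_{i=1}^m c_i X_i = 0$ with $c_i\in K$, and choose such a relation with the smallest possible number of nonzero coefficients; after reindexing and rescaling assume $c_1 = 1$ and $c_2,\dots,c_r$ are the remaining nonzero entries. Apply $\sigma_q$ to the relation to get $\sum_i \sigma_q(c_i)\,\sigma_q(X_i) = 0$; since each $X_i$ solves $\sigma_q X_i = A_q X_i$ and $A_q \in GL_n(K)$ is invertible, we may cancel $A_q$ and obtain $\sum_i \sigma_q(c_i) X_i = 0$. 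Subtracting from the original relation yields
\begin{equation*}
\sum_{i=1}^{m}\bigl(c_i - \sigma_q(c_i)\bigr) X_i = 0,
\end{equation*}
whose coefficient at $X_1$ vanishes because $\sigma_q(1)=1$. By minimality this new relation must be identically zero, i.e.\ $\sigma_q(c_i)=c_i$ for all $i$, so all $c_i$ lie in $K^{\sigma_q}$.

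Applying this to any $n+1$ chosen solutions (which are automatically $K$-linearly dependent in $K^n$) produces a nontrivial $K^{\sigma_q}$-linear relation among them, proving $\dim_{K^{\sigma_q}} \operatorname{Sol}(E_q) \le n$. The argument for a difference field extension $(K',\sigma_q')$ is verbatim the same, viewing solutions as elements of $(K')^n$ and using that $A_q$ remains invertible over $K'$; nothing in the minimal-relation trick used more than the ring structure of $K'$ and the compatibility $\sigma_q'|_K = \sigma_q$.

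The only subtle step is the minimal-relation descent; the rest is routine. Potential pitfalls to watch out for are making sure $A_q$ is genuinely invertible (so the cancellation producing the shifted relation is valid; this is built into the assumption $A_q\in\mathrm{Mat}_{n\times n}(K)$ with $\mathrm{rank}(A_q)=n$, i.e.\ $A_q\in GL_n$) and that rescaling so that $c_1 = 1$ is legitimate, for which we need the $c_i$ to lie in a field, not merely a ring—again automatic here.
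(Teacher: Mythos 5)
The paper does not actually prove this proposition; it is quoted verbatim from the cited reference (Hardouin--Sauloy--Singer, Theorem 2.3.1), so there is no in-paper argument to compare against line by line. That said, your proof is exactly the standard one used there: the minimal-length dependence relation, normalized so one coefficient equals $1$, shifted by $\sigma_q$ and subtracted to force all coefficients into $K^{\sigma_q}$. This is the same mechanism as the paper's Wronskian lemma (Lemma 2.2), and your treatment of the field extension $(K',\sigma_q')$ is also correct. The one point to be careful about is the hypothesis $A_q\in GL_n(K)$. Your cancellation step $A_q\sum_i\sigma_q(c_i)X_i=0\Rightarrow\sum_i\sigma_q(c_i)X_i=0$ genuinely requires invertibility, and the statement as written only assumes $A_q\in\mathrm{Mat}_{n\times n}(K)$ and bounds the dimension by $\operatorname{rank}(A_q)$, which is sharper than $n$ when $A_q$ is singular. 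In that degenerate case your argument yields only the weaker bound $n$; to recover $\operatorname{rank}(A_q)$ one would additionally observe that every solution lies in the $K$-subspace $\sigma_q^{-1}\left(\operatorname{Im}A_q\right)$, of $K$-dimension $\operatorname{rank}(A_q)$, and the descent itself becomes more delicate. Since every system appearing in this paper is the companion matrix of an operator with $a_0a_n\neq 0$, hence invertible over $K$, this caveat does not affect anything downstream, and you were right to flag the invertibility assumption explicitly rather than leave it implicit.
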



\begin{Definition}
Let $\sigma_q X_{q}(z)=A_{q}(z) X_{q}(z)$ be a $q$-difference system. The fundamental solution is a family $\mathcal{X}_{q}= (f_1, \ldots, f_n)$ in  $\operatorname{Sol}\left(E_{q}\right)$, such that the determinant of wronskian matrix is not 0. The wronskian matrix is defined as:
\begin{align}
	W_n(f_1,\ldots, f_n) := 
\left(\begin{array}{ccccc}
f_1 & f_2 &  \cdots & f_n \\
\sigma_qf_1 & \sigma_q f_2  & \cdots & \sigma_q f_n \\
\vdots & \vdots & \ddots  & \vdots \\
\sigma_q^{n-1}f_1 & \sigma_q^{n-1}f_2 & \cdots & \sigma_q^{n-1}f_n 
\end{array}\right).
\end{align}
Especially, in the case of $K=\mathbb{C}\{z, z^{-1} \}$, a fundamental solution of this system is an invertible matricial solution $\mathcal{X}_{q} \in \mathrm{GL}_{n}\left(\mathbb{C}\left\{z, z^{-1}\right\}\right)$ such that $\sigma_q \mathcal{X}_{q}(z)=A_{q}(z) \mathcal{X}_{q}(z)$.	
\end{Definition}

\begin{Lemma}[\cite{hardouin:hal-01959879}, Lemma 2.3.3 (Wronskian lemma)]
Let $f_1, \ldots, f_n \in K$ and denote the determinant of their wronskian matrix as 
\begin{align}
w_n:= \det W_n(f_1,\ldots,f_n).	
\end{align}
Then $w_n(f_1,\ldots,f_n)=0$, if and only if, $f_1,\ldots,f_n$ are linearly dependent over $K^{\sigma_q}$.
\end{Lemma}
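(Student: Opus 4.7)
The plan is to handle the two implications separately and to attack the nontrivial direction by induction on $n$. The ``if'' direction is immediate: if $\sum_{j=1}^n c_j f_j = 0$ with $c_j \in K^{\sigma_q}$ not all zero, then applying $\sigma_q^i$ and using $\sigma_q(c_j) = c_j$ yields $\sum_{j=1}^n c_j \sigma_q^i f_j = 0$ for every $i \geq 0$, so the columns of $W_n(f_1,\ldots,f_n)$ satisfy a nontrivial $K$-linear relation and therefore $w_n = 0$.

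For the converse I would induct on $n$, the base case $n = 1$ being trivial since $w_1 = f_1$. For the inductive step, first observe that if some proper subfamily is already $K^{\sigma_q}$-dependent, then so is the full family and there is nothing to prove; so I may assume every $(n-1)$-subfamily of $\{f_1,\ldots,f_n\}$ is $K^{\sigma_q}$-independent, and the inductive hypothesis then forces $w_{n-1}(f_1,\ldots,f_{n-1}) \neq 0$. Now suppose $w_n = 0$. Then the columns of $W_n$ are $K$-linearly dependent, so there exist $a_1, \ldots, a_n \in K$, not all zero, with
\begin{align*}
\sum_{j=1}^n a_j\, \sigma_q^i f_j = 0, \qquad i = 0, 1, \ldots, n-1.
\end{align*}
The coefficient $a_n$ cannot vanish, for otherwise the first $n-1$ columns of $W_n$ would be $K$-dependent and the top $(n-1)\times(n-1)$ minor $w_{n-1}(f_1,\ldots,f_{n-1})$ would be zero, contradicting what was just established. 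After rescaling I take $a_n = 1$. Applying $\sigma_q$ to the relation for index $i$ and subtracting the relation for index $i+1$ annihilates the $j = n$ term (since $\sigma_q(a_n) - a_n = 0$) and yields
\begin{align*}
\sum_{j=1}^{n-1} \bigl(\sigma_q(a_j) - a_j\bigr)\, \sigma_q^{i+1} f_j = 0, \qquad i = 0, \ldots, n-2.
\end{align*}
These are exactly the relations saying that the tuple $\bigl(\sigma_q(a_j) - a_j\bigr)_{j=1}^{n-1}$ is a null vector of $W_{n-1}(\sigma_q f_1, \ldots, \sigma_q f_{n-1})$. But $\det W_{n-1}(\sigma_q f_1, \ldots, \sigma_q f_{n-1}) = \sigma_q\bigl(w_{n-1}(f_1,\ldots,f_{n-1})\bigr) \neq 0$, so every $\sigma_q(a_j) - a_j$ vanishes; that is, each $a_j$ lies in $K^{\sigma_q}$. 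Substituting back into the $i = 0$ equation produces the desired $K^{\sigma_q}$-linear dependence.

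The main obstacle is arranging the differencing step so that the passage to a smaller Wronskian is legitimate. Two points are essential: that $a_n$ can be assumed nonzero and normalized to $1$ (which relies on the inductive hypothesis applied to the complementary $(n-1)$-subfamily), and that $\sigma_q$ fixes the normalized coefficient so that the $j = n$ column drops out after taking $\sigma_q$ and subtracting. Everything else amounts to routine manipulations inside the difference field $(K, \sigma_q)$, together with the fact that $\sigma_q$ commutes with the determinant because it is a field automorphism.
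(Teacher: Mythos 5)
Your proof is correct. Note that the paper itself gives no proof of this lemma --- it is quoted verbatim from Hardouin--Sauloy--Singer (Lemma 2.3.3) --- so there is nothing internal to compare against; your induction (normalize $a_n=1$, apply $\sigma_q$ and subtract to kill the last column, then invoke the nonvanishing of $\sigma_q\bigl(w_{n-1}\bigr)$ to force $\sigma_q(a_j)=a_j$) is precisely the standard argument used in that reference, and all the steps, including the reduction to the case where every proper subfamily is $K^{\sigma_q}$-independent and the identification $\det W_{n-1}(\sigma_q f_1,\ldots,\sigma_q f_{n-1})=\sigma_q(w_{n-1})$, are sound.
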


\begin{Proposition}[\cite{hardouin:hal-01959879}, Proposition 2.4.4. p.120] 
Let $\mathcal{X}_q \in GL_n \left( K \right)$ a fundamental matricial solution of (\ref{Sol}). Then:
\begin{align}
	\operatorname{Sol}\left(E_{q}\right) =\{ \mathcal{X}_qC \ \big{|} \ C \in \left(K^{\sigma_q} \right)^n \} = \mathcal{X}_q \cdot \left(K^{\sigma_q} \right)^n.
\end{align}	
\end{Proposition}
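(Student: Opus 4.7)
The plan is to prove the two inclusions of the set equality separately, using only that $\mathcal{X}_q \in GL_n(K)$ satisfies $\sigma_q \mathcal{X}_q = A_q \mathcal{X}_q$ and that $\sigma_q$, extended entrywise, is a ring automorphism of $\mathrm{Mat}_{n\times n}(K)$ (in particular, it commutes with matrix multiplication and with matrix inversion, since $\sigma_q(M)\sigma_q(M^{-1})=\sigma_q(MM^{-1})=I$).

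First I would verify the inclusion $\mathcal{X}_q\cdot (K^{\sigma_q})^n \subseteq \operatorname{Sol}(E_q)$. For any $C \in (K^{\sigma_q})^n$ the entries of $C$ are $\sigma_q$-invariant, so $\sigma_q C = C$, and a direct computation gives
\begin{align}
\sigma_q(\mathcal{X}_q C) = \sigma_q(\mathcal{X}_q)\,\sigma_q(C) = (A_q \mathcal{X}_q)\,C = A_q(\mathcal{X}_q C),
\end{align}
showing $\mathcal{X}_q C \in \operatorname{Sol}(E_q)$.

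For the reverse inclusion $\operatorname{Sol}(E_q)\subseteq \mathcal{X}_q\cdot (K^{\sigma_q})^n$, I would take an arbitrary $X \in \operatorname{Sol}(E_q)$ and define the candidate constant vector $C := \mathcal{X}_q^{-1} X \in K^n$, which is well-defined by the invertibility of $\mathcal{X}_q$. It then suffices to show $C \in (K^{\sigma_q})^n$, i.e.\ that each coordinate of $C$ is fixed by $\sigma_q$. This follows from the chain
\begin{align}
\sigma_q C = \sigma_q(\mathcal{X}_q^{-1})\,\sigma_q(X) = (\sigma_q \mathcal{X}_q)^{-1}(A_q X) = (A_q \mathcal{X}_q)^{-1} A_q X = \mathcal{X}_q^{-1} X = C,
\end{align}
after which $X = \mathcal{X}_q C$ completes the argument.

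There is no real obstacle here beyond bookkeeping: the statement is essentially a matricial version of ``two solutions of a first-order linear system differ by a constant,'' and the only point requiring a moment's care is that $\sigma_q$, which is given as a field automorphism of $K$, must be extended to matrices and shown to commute with inversion. Once that extension is in place, both inclusions reduce to one-line manipulations, and the conclusion $\operatorname{Sol}(E_q) = \mathcal{X}_q\cdot (K^{\sigma_q})^n$ follows. I would also remark in passing that combined with the Wronskian lemma, this exhibits $\operatorname{Sol}(E_q)$ as a free $K^{\sigma_q}$-module of rank exactly $n$, matching the upper bound of the preceding proposition.
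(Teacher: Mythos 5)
Your proof is correct and is the standard argument; the paper itself states this proposition only as a citation to Hardouin--Sauloy--Singer without reproducing a proof, and your two-inclusion argument (direct computation for $\mathcal{X}_q\cdot(K^{\sigma_q})^n\subseteq\operatorname{Sol}(E_q)$, and $C:=\mathcal{X}_q^{-1}X$ with $\sigma_q C=C$ for the converse) is exactly the one in that reference. The only step worth making explicit is that $(A_q\mathcal{X}_q)^{-1}A_qX=\mathcal{X}_q^{-1}X$ uses the invertibility of $A_q$, which is not part of the paper's definition of a $q$-difference system but follows at once from $A_q=(\sigma_q\mathcal{X}_q)\,\mathcal{X}_q^{-1}$ being a product of invertible matrices.
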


\subsection{Regular singular $q$-difference equations}
Usually, we say global study of $q$-difference equations if we take $K= \mathbb{C}(z)$, local analytic study if we take $K=\mathbb{C}(\{z\})$ and formal study if we take $K=\mathbb{C}((z))$. In the following, we shall look for solutions in $K=\mathcal{M}(\mathbb{C}^*,0)=\mathbb{C}\{z,z^{-1} \}$ due to many consequences on the shape of the analytical theory. 

\begin{Definition}
Let $\sigma_q X_{q}(z)=A_{q}(z) X_{q}(z)$ be a $q$-difference system. Consider a matrix $P_{q} \in \mathrm{GL}_{n}\left(\mathbb{C}\left\{z, z^{-1}\right\}\right)$. The gauge transform of the matrix $A_{q}$ by the gauge transformation $P_q$ is the matrix
\begin{align}
	P_{q} \cdot\left[A_{q}\right]:=\left(\sigma_q P_{q}\right) A_{q} P_{q}^{-1}.
\end{align}
A second $q$-difference system $\sigma_q X_{q}(z)=B_{q}(z) X_{q}(z)$ is said to be equivalent by gauge transform to the first one if there exists a matrix $P_{q} \in \mathrm{GL}_{n}\left(\mathbb{C}\left\{z, z^{-1}\right\}\right)$ such that
\begin{align}
B_{q}=P_{q} \cdot\left[A_{q}\right].	
\end{align}
\end{Definition}

Let us introduce the regular and regular singular $q$-difference equations.
\begin{Definition}
A system $\sigma_q X_{q}(z)=A_{q}(z) X_{q}(z)$ is regular if $A_q(0)$ is diagonal and if its eigenvalues are of the form $q^k$ for $k \in \mathbb{Z}_{\geq 0}$ 	
\end{Definition}

\begin{Definition} \label{reg-sing}
A system $\sigma_q X_{q}(z)=A_{q}(z) X_{q}(z)$ is said to be regular singular at $z=0$ if there exists a $q$-gauge transform $P_{q} \in \mathrm{GL}_{n}\left(\mathbb{C}\left\{z, z^{-1}\right\}\right)$ such that the matrix $\left(P_{q} \cdot\left[A_{q}\right]\right)(0)$ is well-defined and invertible: $P_{q} \cdot\left[A_{q}\right](0) \in \mathrm{GL}_{n}(\mathbb{C})$. Otherwise, we say the system irregular singular.	 
\end{Definition}

Let us give a criteria for when a $q$-difference equation is regular singular at $z=0$.
\begin{Proposition}[\cite{2019arXiv191100254R}, Proposition V.2.1.14.] \label{critieria-regular}
Let $P=\sum_{k}^{n} a_{k}(z,q)\left(\sigma_q \right)^{k}$ be a $q$-difference operator. As we stated before that the $q$-difference equation $P \cdot f (z)=0$ can be vectorised to a $q$-difference system $\sigma_q X_{q}(z)=$ $A_{q}(z) X_{q}(z)$ where $A_{q}(z)$ is the companion matrix of the operator $P$. The resulting $q$ -difference system is
\begin{align}
\sigma_q \left(\begin{array}{c}
f(z) \\
\sigma_q f(z) \\
\vdots \\
\left(\sigma_q \right)^{n-1} f(z)
\end{array}\right)=\left(\begin{array}{ccccc}
0 & 1 & 0 & \cdots & 0 \\
\vdots & 0 & \ddots & & 0 \\
0 & \cdots & \cdots & \cdots & 1 \\
-\frac{a_{0}}{a_{n}} & -\frac{a_{1}}{a_{n}} & \cdots & \cdots & -\frac{a_{n-1}}{a_{n}}
\end{array}\right)\left(\begin{array}{c}
f(z) \\
\sigma_q f(z) \\
\vdots \\
\left(\sigma_q \right)^{n-1} f(z)
\end{array}\right).
\end{align}
 We denote by $val_0 \left(a_{k}\right)$ the $z$-adic valuation of the polynomial $a_{k}$, i.e. the lowest integer $\alpha \in \mathbb{Z} \cup \{+\infty\}$ such that $\left(z^{-\alpha} a_{k}(z)\right)\big{|} _{z=0} \neq 0$. The $q$ -difference system associated to the $q$-difference equation $P\left(\sigma_q \right) f(z)=0$ is regular singular if and only if $val_{0}(a_0(z))-val_{0}(a_n(z))=0$, and for every $k \in \{1, \ldots, r-1\}$, ${val}_{0}(a_k(z))-{val}_{0}(a_n(z)) \geq 0$.		
\end{Proposition}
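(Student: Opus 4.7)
The plan is to verify each implication of the biconditional separately. The easy direction will be to show that the valuation conditions suffice for regular singularity, and the converse will be argued via a gauge invariant attached to $P$.

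For the sufficient direction, assume $val_0(a_0) = val_0(a_n)$ and $val_0(a_k) \geq val_0(a_n)$ for $1 \leq k \leq n-1$. The first observation is that these hypotheses mean every ratio $a_k/a_n$ has nonnegative $z$-valuation at $0$, and moreover $a_0/a_n$ takes a nonzero finite value there. Consequently the companion matrix $A_q$ extends holomorphically to $z=0$, and an expansion along the first column yields $\det A_q(0) = (-1)^n a_0(0)/a_n(0) \neq 0$, so $A_q(0) \in \mathrm{GL}_n(\mathbb{C})$. Taking $P_q = I$ in the definition of regular singular then closes this direction with no further work.

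For the necessary direction, the natural tool is the Newton polygon of $P$ at $z=0$, namely the lower convex hull in $\mathbb{R}^2$ of the points $\{(k, val_0(a_k)) : 0 \leq k \leq n\}$. Two inputs from the general theory of $q$-difference equations (recorded in \cite{2019arXiv191100254R}) will be assumed: first, the multiset of slopes of this Newton polygon is invariant under gauge transformation of the associated $q$-difference system; second, a $q$-difference system at $z=0$ is regular singular precisely when all of its slopes vanish. Granting these, regular singularity of the system attached to $P$ forces the Newton polygon of $P$ to collapse to a single horizontal segment joining $(0, val_0(a_0))$ and $(n, val_0(a_n))$ with no point below it. Unpacking this geometric condition gives exactly the stated valuation conditions: equality of $val_0(a_0)$ and $val_0(a_n)$, together with $val_0(a_k) \geq val_0(a_n)$ for the intermediate $k$.

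The main obstacle is not in either implication individually but in the appeal to gauge invariance of the Newton slopes that underpins the converse. A fully self-contained argument would either have to re-derive this from how the cyclic vector lemma intertwines scalarization with gauge change, or else replace the slopes argument by an inductive bare-hands analysis of how a holomorphic invertible gauge $P_q \in \mathrm{GL}_n(\mathbb{C}\{z,z^{-1}\})$ acts on the valuation profile of a companion matrix. For these lecture notes it seems more natural, as in the statement itself, simply to cite the reference.
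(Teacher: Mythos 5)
The paper offers no proof of this proposition at all: it is quoted verbatim from Roquefeuil (Proposition V.2.1.14 of \cite{2019arXiv191100254R}) and used as a black box, so there is no in-text argument to compare yours against. Judged on its own terms, your proposal is sound and goes further than the notes do. The sufficiency direction is complete and correct: the valuation hypotheses make each $a_k/a_n$ holomorphic at $0$ with $a_0/a_n$ nonvanishing there, the cofactor expansion of the companion matrix along its first column gives $\det A_q(0)=(-1)^n\left(a_0/a_n\right)(0)\neq 0$, and taking $P_q=I$ in the definition of regular singular finishes it. (One small edge case worth a sentence: if $a_0\equiv 0$ then $A_q(z)$ is nowhere invertible and no gauge transform can repair that, so that degenerate case is consistent with the criterion as well.) The necessity direction, as you say yourself, is only an outline: it rests on two nontrivial facts from the general theory --- that the slopes of the Newton polygon at $0$ are gauge invariants of the associated system, and that regular singularity is equivalent to the polygon having the single slope $0$ --- and the reduction of the stated valuation inequalities to ``the Newton polygon is one horizontal segment with all points on or above it'' is then immediate and correct. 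Since those two inputs are essentially equivalent in strength to the proposition being proved, your argument is honest but not self-contained; in the context of these lecture notes that is exactly the trade-off the authors made by citing \cite{2019arXiv191100254R} outright, and your write-up has the advantage of at least isolating which half is elementary and which half carries the real content.
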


Let's introduce some special functions which are needed to solve $q$-difference equations. The Jacobi's theta function is defined as follows 
\begin{align}
\theta_{q}(z) &=\sum_{d \in \mathbb{Z}} q^{-\frac{d(d+1)}{2}} z^{d}.
\end{align}
This function satisfies the $q$-difference equation $\sigma_q \theta_{q}(z)=z \theta_{q}(z)$. And it has a famous Jacobi's triple identity 
\begin{align}
	\theta_q(z)= (q^{-1};q^{-1})_\infty (-q^{-1}z;q^{-1})_\infty (-z^{-1};q^{-1})_\infty,
\end{align}
here we use the $q$-Pochhammer symbol $(x;q)_k := \prod_{i=1}^k (1-xq^{i-1})  $. By using Jacobi's theta function, we define the following two special functions.

\begin{Definition}
Let $\lambda_{q} \in \mathbb{C}^{*}$. The $q$-character associated to $\lambda$ is the function $e_{q, \lambda_{q}} \in \mathcal{M}\left(\mathbb{C}^{*}\right)$ defined by
\begin{align}
e_{q, \lambda_{q}}(z)=\frac{\theta_{q}(z)}{\theta_{q}\left(  z/\lambda_{q} \right)} \in \mathcal{M}\left(\mathbb{C}^{*}\right).	
\end{align}
\end{Definition}
The function $e_{q,\lambda_q}$ satisfies the	 $q$-difference equation $\sigma_q e_{q, \lambda_{q}}(z)=\lambda_{q} e_{q, \lambda_{q}}(z)$.

\begin{Definition} \label{q-log}
The $q$-logarithm is the function $\ell_{q} \in \mathcal{M}\left(\mathbb{C}^{*}\right)$ defined by
\begin{align}
\ell_{q}(z)=z\frac{ \theta_{q}^{\prime}(z)}{\theta_{q}(z)}.	
\end{align}	
\end{Definition}
 Since 
\begin{align}
\theta_q(qz) = z \theta_q(z),	
\end{align}
then
\begin{align}
	\frac{\partial}{\partial z} \theta_q(qz) &= q \theta_q^{\prime}(qz)   \\
	                                         &= \theta_q(z) +z\theta_q^{\prime}(z).
\end{align}
So
\begin{align}
qz \frac{\theta^{\prime}_q(qz)}{\theta_q(qz)} &= z^2\frac{\theta^{\prime}_q(z)}{\theta_q(qz)} + z \frac{\theta_q(z)}{\theta_q(qz)}	\\
&= z \frac{\theta^{\prime}_q(z)}{\theta_q(z)} +1,
\end{align}
i.e.,
\begin{align}
\sigma_q \ell_q(z) = \ell_q(z) +1.	
\end{align}

\begin{Remark}
In the literature \cite{Adams}, one also considered $\frac{\log z}{\log q}$, which  satisfies
\begin{align}
\sigma_q \left( \frac{\log z}{\log q} \right)	= \frac{\log z}{\log q} +1,
\end{align}
but it is multi-value. 	In this article, we prefer the single valued function $\ell_q(z)$. For Picard-Fuch equation, one can read off the monodromy
directly from local solution. On the other hand, we can have both single valued and multi-valued solutions for $q$-difference equation. This is related 
to the fact that $q$-difference equation has a rather large the field of constants. The issue of monodromy for $q$-difference equation is rather subtle \cite{Sauloy03}.
\end{Remark}

Now we can state the existence of a fundamental solution for regular singular $q$-difference equations under certain condition.

\begin{Definition}
Consider a regular singular $q$-difference system $\sigma_q X_q(z) = A_q(z)X_q(z) $ and denote by $(\lambda_i)$ the eigenvalues of the matrix $A_q(0)$. This $q$-difference system is said to be non ($q$-)resonant if for every $i \neq j$, we have $\frac{\lambda_i}{\lambda_j} \notin q^{\mathbb{Z} \backslash \{0\} }$, where $	q^{\mathbb{Z}\backslash \{0 \} }:=\left\{q^{k} \mid k \in \mathbb{Z}\backslash \{0\} \right\} \subset \mathbb{C} $.	
\end{Definition}

For a non-resonant system, we can recursively build a gauge transform $F_{q} \in \mathrm{GL}_{n}\left(\mathbb{C}\left\{z, z^{-1}\right\}\right)$ which sends the matrix $A_q(z)$ to the constant matrix $A_q(0)$, for details, see \cite{hardouin:hal-01959879}. Then taking the Jordan-Chevalley decomposition of $A(0)=A_s A_u$ where $A_s$ is semi-simple, $A_u$ is unipotent and $A_s$, $A_u$ commute. 

Since $N=A_u-I_n$ is nilpotent, we can define
\begin{align}
A_u^{\ell_q} := (I_n+N)^{\ell_q} := \sum_{k \geq 0 } \left(\begin{array}{l} \ell_q \\ k \end{array}\right) N^k	 \label{A_u^l},
\end{align}
where
\begin{align}
	\left(\begin{array}{l} \ell_q \\ k \end{array}\right) := \frac{\ell_q(\ell_q-1)\cdots(\ell_q-(k-1))}{k!}, 
\end{align}
note that (\ref{A_u^l}) is actually a finite sum and $A_u^{\ell_q}$ is unipotent, then we have
\begin{align}
\sigma_q A_u^{\ell_q} = A_u	A_u^{\ell_q} = A_u^{\ell_q} A_u.
\end{align}
Thus we set 
\begin{align}
e_{q,A_u} := A_u^{\ell_q}.	
\end{align}

Take a basis change $P$ to diagonalise $A_s=P^{-1}diag(\lambda_i)P$. We define
\begin{align}
e_{q, A_s}:=P^{-1} \operatorname{diag}\left(e_{q, \lambda_{i}}(z)\right) P,	
\end{align}
which satisfies
\begin{align}
\sigma_q e_{q, A_s} = A_s e_{q, A_s} = e_{q, A_s} A_s.	
\end{align}
Then one can check that the product $F_{q}\cdot e_{q, A_s} \cdot e_{q, A_u}=: \mathcal{X}_{q}(z)$ is a fundamental solution of the $q$-difference system $\sigma_q X_{q}(z)=A_{q}(z) X_{q}(z)$. We arrive at the following theorem.

\begin{Theorem}[\cite{Sauloy00}, 1.1.4] \label{fundamental-solution}
	Let $\sigma_q X_{q}(z)=A_{q}(z) X_{q}(z)$ be a regular singular $q$-difference system. Assume that this $q$-difference system is non-resonant. Then, there exists a fundamental solution of $\mathcal{X}_{q} \in G L_{n}\left(\mathbb{C}\left\{z, z^{-1}\right\}\right)$ of this $q$-difference equation expressed with functions $e_{q,\lambda_q}(z)$ and $ \ell_q(z) $.
\end{Theorem}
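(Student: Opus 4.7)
The plan is to follow the construction sketched in the paragraphs immediately preceding the theorem: first reduce the system to constant coefficients by an analytic gauge transform, then build the fundamental solution of the resulting constant system explicitly via the Jordan--Chevalley decomposition, using the special functions $e_{q,\lambda}$ and $\ell_q$.

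For the first step, regular singularity (Definition~\ref{reg-sing}) furnishes a gauge $P_q \in \mathrm{GL}_n(\mathbb{C}\{z,z^{-1}\})$ with $(P_q \cdot [A_q])(0) \in \mathrm{GL}_n(\mathbb{C})$, so after replacing $A_q$ by its gauge transform I may assume $A_q(0) \in \mathrm{GL}_n(\mathbb{C})$. I then look for a further gauge $F_q \in \mathrm{GL}_n(\mathbb{C}\{z,z^{-1}\})$ with $F_q(0) = I_n$ satisfying $F_q \cdot [A_q] = A_q(0)$, equivalently $\sigma_q(F_q)\, A_q = A_q(0)\, F_q$. Expanding $A_q = A_q(0) + \sum_{m \ge 1} A_m z^m$ and $F_q = I_n + \sum_{m \ge 1} F_m z^m$ and collecting coefficients at $z^m$, this becomes the Sylvester-type recursion
\begin{align}
A_q(0)\, F_m - q^m F_m\, A_q(0) \;=\; \sum_{k=0}^{m-1} q^k F_k\, A_{m-k}, \qquad m \geq 1.
\end{align}
The operator $X \mapsto A_q(0) X - q^m X A_q(0)$ on $n \times n$ matrices has spectrum $\{\lambda_i - q^m \lambda_j\}_{i,j}$, and the non-resonance hypothesis $\lambda_i/\lambda_j \notin q^{\mathbb{Z} \setminus \{0\}}$ is precisely what guarantees this operator is invertible for every $m \geq 1$, so each $F_m$ is uniquely determined. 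Since $|q|>1$, one has $|\lambda_i - q^m \lambda_j| \sim |q|^m$, giving geometric decay of the inverse operators, and a standard majorant argument then yields a positive radius of convergence for $\sum F_m z^m$ together with $F_q(0) = I_n$ invertible.

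Once the system has been reduced to the constant form $\sigma_q Y = A_q(0)\, Y$, I use the Jordan--Chevalley decomposition $A_q(0) = A_s A_u$ with $A_s$ semisimple, $A_u = I_n + N$ unipotent, $[A_s, A_u] = 0$, and $A_s = P^{-1} \operatorname{diag}(\lambda_i) P$. Setting
\begin{align}
e_{q,A_s} := P^{-1} \operatorname{diag}\bigl(e_{q,\lambda_i}(z)\bigr) P, \qquad A_u^{\ell_q} := \sum_{k=0}^{n-1} \binom{\ell_q}{k} N^k,
\end{align}
the identity $\sigma_q e_{q,\lambda_i} = \lambda_i e_{q,\lambda_i}$ gives $\sigma_q e_{q,A_s} = A_s\, e_{q,A_s}$, while Pascal's identity for binomial coefficients combined with $\sigma_q \ell_q = \ell_q + 1$ gives $\sigma_q A_u^{\ell_q} = A_u \cdot A_u^{\ell_q}$; the binomial sum is finite because $N^n = 0$. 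Commutation of $A_s$ and $A_u$ then shows that $e_{q,A_s}\, A_u^{\ell_q}$ solves the constant system, and its determinant $\prod_i e_{q,\lambda_i}$ is a nonzero element of $\mathcal{M}(\mathbb{C}^*)$. Transporting back through $F_q$ (and the initial $P_q$) yields a fundamental solution of the original system in $\mathrm{GL}_n(\mathbb{C}\{z,z^{-1}\})$ expressed entirely in $e_{q,\lambda}$ and $\ell_q$, and the Wronskian lemma (Lemma~2.3.3 in \cite{hardouin:hal-01959879}) confirms its columns are linearly independent over $\mathcal{M}(\mathbb{E}_q)$.

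The main obstacle is the construction of $F_q$ in the first step: the Sylvester recursion degenerates exactly at resonances, and the passage from formal to analytic solvability requires the decay estimate on $\bigl(A_q(0)\cdot\; - q^m\, \cdot A_q(0)\bigr)^{-1}$ sketched above. The remaining steps are algebraic identities built on $\sigma_q e_{q,\lambda} = \lambda e_{q,\lambda}$ and $\sigma_q \ell_q = \ell_q + 1$, both of which are already established in the excerpt.
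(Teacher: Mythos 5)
Your proposal is correct and follows essentially the same route as the paper: reduce to the constant system $\sigma_q Y = A_q(0)Y$ by a convergent gauge transform $F_q$ (whose existence the paper asserts for non-resonant systems, citing \cite{hardouin:hal-01959879}, and whose Sylvester recursion you correctly spell out), then assemble the fundamental solution as $F_q \cdot e_{q,A_s}\cdot e_{q,A_u}$ via the Jordan--Chevalley decomposition exactly as in the paragraphs preceding the theorem. The only difference is that you supply the details of the recursion and the convergence estimate that the paper delegates to the reference.
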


\section{Local solutions for regular singular cases}
In the last section we introduced some general results about the solutions of regular singular $q$-difference system. However, it still be hard to obtain a explicit formula for a solution. In this section, we will use Frobenius method to construct the solutions. The Forbenius method for linear ordinary $q$-difference equation could be dated back to \cite{Adams} and \cite{Carmichael}. 

In the following, we focus on the computations and use concrete examples to show how the Frobenius method works rather than giving a general result.

\subsection{Frobenius method}
Let's consider the equation
\begin{align}
\sum_{i=0}^n a_i(z) (\sigma_q)^i f(z)=0,	
\end{align}
with
\begin{align}
a_i(z)= a_{i0}+a_{i1}z+a_{i2}z^2 + \cdots	.
\end{align}
Since it is regular singular, then coefficients $a_i(z)$ satisfy conditions in Proposition \ref{critieria-regular}, then we could assume that $a_{00}, a_{n0} \neq 0$.

Let's consider the following equation	
\begin{align}
a_{n0}x^n+ a_{n-1,0} x^{n-1} + \cdots + a_{10}x + a_{00} =0. \label{char-equ-2}
\end{align}
It is called the \emph{characteristic equation}, which plays an important role in constructing solutions. 
\begin{itemize}
\item{\bf Non-resonant case:} suppose the $n$ roots $\{c_1, \ldots, c_n \}$ of (\ref{char-equ-2}) are all distinct and 
\begin{align}
c_i/c_j \notin q^{\mathbb{Z}}, \quad  \forall i \neq j . \label{nonresonant-case} 
\end{align}
Then there exist a set of $n$ power-series solutions of the form:
\begin{align}
	 S_{i}(z,q)=e_{q, c_i} \cdot F_i(z,q),\quad {\rm{where}} \quad F_i(z,q) = \sum_{k=0}^{\infty} f_{ik}(q) z^k, \label{sol-non-renonant}
\end{align}
and for $i=1, \ldots, n$. 
\item{\bf Resonant case:} suppose the $n$ roots are as follows
\begin{align}
	c_i\cdot q^{m_{ij}}, \quad {\rm for} \quad i=1,\ldots, r, \quad j=0, \ldots k_i, \label{resonant-case} 
\end{align}
such that
\begin{itemize}
\item[(i)] $c_i/c_j \notin q^{\mathbb{Z}}$,
\item[(ii)] $0=m_{i0} \leq m_{i1} \leq \ldots \leq m_{ik_i} $, and  $\sum_{i=1}^r (k_i+1)=n $.  
\end{itemize}
Then there are $n$ power-series solutions of the following form
\begin{align}
	S_{i0}(z,q)&=e_{q,c_i}F_{i0}(z,q), \label{sol-resonant-0} \\
	S_{i1}(z,q)&=\ell_q(z) S_{i0}(z,q)+ e_{q,c_iq^{m_{i1}}}F_{i1}(z,q), \label{sol-resonant-1} \\
	           & \ldots \nonumber \\
  S_{ik_i}(z,q)&=\ell_q(z) S_{i,k_i-1}(z,q)+ e_{q,c_iq^{m_{ik_i}}}F_{ik_i}(z,q), \label{sol-resonant-ki}
\end{align}
where $i=1, \ldots, r$ and $j=0,\ldots,k_i$, and 
\begin{align}
F_{ij}(z,q)=\sum_{k=0}^\infty	f_{ijk}(q)z^k
\end{align}	
\end{itemize}

Thus, in regular singular cases, there exist a set of $n$ power-series solutions. In the following, we use concrete examples to show how the Frobenius method works. Furthermore, in the last of this section, we show that these $n$ power-series solutions provide a complete set of solutions analytic in the vicinity of the origin under certain conditions.


\begin{Example} \label{Example-1}
Consider the following degree 2 regular singular difference equation
\begin{align}
\left[ a(z,q) \sigma_q^2 + b(z,q) \sigma_q + d(z,q) \right]f=0.	
\end{align}
From the Proposition \ref{critieria-regular}, we know
\begin{align}
val_0 (a(z,q))-val_0(d(z,q))=0 \ and \ val_0(b(z,q)) - val_0(a(z,q)) \geq 0. \label{val-case-1}
\end{align}
For simplicity, we assume $a(z,q)=1$,
\begin{align}
	\sigma_q^2 f + b(z,q) \sigma_q f  + d(z,q)f=0,
\end{align}
where 
\begin{align}
b(z,q)=\sum_{n=0}^{\infty} b_n(q)z^n, \  d(z,q)=\sum_{n=0}^{\infty}	d_n(q)z^n, \ d_0 \neq 0.
\end{align}

We are looking for the solution of the form 
\begin{align}
\sum_{n=0}^{\infty} f_n(q)z^{n+r},	
\end{align}
thus we have
\begin{align}
\sum_{n=0}^\infty f_n q^{2(n+r)}z^{n+r} + \sum_{n=0}^{\infty}b_nz^n 	\sum_{n=0}^\infty f_n q^{n+r}z^{n+r} + \sum_{n=0}^\infty d_n z^n \sum_{n=0}^\infty f_n z^{n+r} =0.
\end{align}
Then the coefficient of $z^{n+r}$ equals to 0, i.e.
\begin{align}
f_n(q^{2(n+r)}+q^{n+r}b_0(q)+d_0(q)) + \sum_{k=1}^{n-1}f_k(q^{k+r}b_{n-k}+d_{n-k})=0	,
\end{align}
with initial condition
\begin{align}
f_0(q^{2r}+q^r b_0(q) + d_0(q))=0,	
\end{align}
then we obtain a necessary condition for non-zero solution, i.e.
\begin{align}
q^{2r}+q^r b_0(q) + d_0(q) = 0,	
\end{align}
we call
\begin{align}
x^2+b_0(q)x+d_0(q)=0, \label{char-equ-1}	
\end{align}
the \emph{characteristic equation}. Furthermore, if $q^{2(n+r)}+q^{n+r}b_0(q)+d_0(q) \neq  0 $, i.e. $q^{n+r}$ is not a solution of characteristic equation for $n \geq 1$, then
\begin{align}
f_n = \frac{-1}{q^{2(n+r)}+q^{n+r}b_0(q)+d_0(q)}	 \left( \sum_{k=1}^{n-1}f_k(q^{k+r}b_{n-k}+d_{n-k})\right)
\end{align}

{\bf{Conclusion:}} Suppose $q^{r_1}, q^{r_2}$ are the $q^r$-solutions of characteristic equation (\ref{char-equ-1}) , 
\begin{itemize}
 \item{Case 1:} If $r_1 - r_2 \notin \mathbb{Z} $, then there exist two solutions of the form
\begin{align}
 z^{r_1}F_1 , \quad z^{r_2}F_2,	
\end{align}
for $F_1$ and $F_2$ are all power series. 

\item{Case 2:} If $r_1-r_2=n \in \mathbb{Z}_{+}$, then there exists a solution of the form
\begin{align}
z^{r_1}F_1,	
\end{align}
where $F_1$ is a power series.
\end{itemize}
\end{Example}

In order to construct the second solution in case 2, we have to use the following single-value function satisfying 
\begin{align}
\sigma_q f = f+1,	
\end{align}
i.e.
\begin{align}
\ell_q(z) := z \frac{\theta^{\prime}_q(z)}{\theta_q(z)},	
\end{align}
the $q$-logarithm defined in Definition \ref{q-log}. And we will require an intermediary result on it.
\begin{Lemma}[\cite{Adams} and \cite{2019arXiv191100254R}, Lemma VI.1.1.10]
	Let $N \in \mathbb{Z}_{\geq 0}$. The family consisting of the functions $\ell_q(z)^i \in \mathcal{M}(\mathbb{C}^*)$ for $ i \in \{0, \ldots, N \} $ is linearly independent over field $\mathcal{M}(\mathbb{E}_q)$.
\end{Lemma}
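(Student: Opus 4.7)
The plan is to argue by induction on $N$, exploiting the key identity $\sigma_q \ell_q(z) = \ell_q(z) + 1$ together with the fact that any $c \in \mathcal{M}(\mathbb{E}_q)$ is fixed by $\sigma_q$. The base case $N=0$ reduces to the tautology that $1 \neq 0$ in $\mathcal{M}(\mathbb{C}^*)$. For the inductive step, suppose the result holds for $N-1$ and assume toward a contradiction that there exist $c_0, \ldots, c_N \in \mathcal{M}(\mathbb{E}_q)$, not all zero, with
\begin{equation*}
\sum_{i=0}^{N} c_i(z)\, \ell_q(z)^i = 0,
\end{equation*}
and choose such a relation with $c_N \neq 0$ (otherwise the induction hypothesis applied to the truncated sum already finishes).

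Next I would apply $\sigma_q$ to the relation. Since each $c_i$ lies in $\mathcal{M}(\mathbb{E}_q) = \mathcal{M}(\mathbb{C}^*)^{\sigma_q}$ and $\sigma_q \ell_q = \ell_q + 1$, this yields
\begin{equation*}
\sum_{i=0}^{N} c_i(z)\, \bigl(\ell_q(z)+1\bigr)^i = 0.
\end{equation*}
Subtracting the original relation and expanding via the binomial theorem gives
\begin{equation*}
\sum_{i=0}^{N} c_i(z) \sum_{j=0}^{i-1} \binom{i}{j} \ell_q(z)^j = \sum_{j=0}^{N-1} \left( \sum_{i=j+1}^{N} \binom{i}{j} c_i(z) \right) \ell_q(z)^j = 0.
\end{equation*}
This is a relation among $\ell_q^0, \ldots, \ell_q^{N-1}$ with coefficients in $\mathcal{M}(\mathbb{E}_q)$, so by the induction hypothesis each inner sum vanishes. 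In particular, taking $j = N-1$, the relation $\binom{N}{N-1} c_N = N\, c_N = 0$ forces $c_N = 0$ (we are in characteristic zero and $N \geq 1$), contradicting our choice.

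There is no real obstacle here; the only subtlety to flag is the need to have chosen the relation with $c_N \neq 0$ at the top degree (otherwise one should work at the largest index with nonzero coefficient and re-index $N$ accordingly), and to use that the coefficients belong to $\mathcal{M}(\mathbb{E}_q)$ precisely so that applying $\sigma_q$ leaves them unchanged—this is what allows the subtraction step to produce a new $\mathcal{M}(\mathbb{E}_q)$-linear relation of strictly smaller degree in $\ell_q$.
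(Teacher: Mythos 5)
Your argument is correct: applying $\sigma_q$ to a putative relation, using that the coefficients lie in $\mathcal{M}(\mathbb{E}_q)=\mathcal{M}(\mathbb{C}^*)^{\sigma_q}$ while $\sigma_q\ell_q=\ell_q+1$, and subtracting produces a strictly lower-degree relation whose top coefficient is $N c_N$, forcing $c_N=0$ in characteristic zero. Note that the paper itself gives no proof of this lemma—it is quoted from Adams and from Roquefeuil's Lemma VI.1.1.10—and your descent-by-$\sigma_q$ induction is precisely the standard argument found in those references, so there is nothing to reconcile.
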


We use the following a little bit more general example to explain this idea. 

\begin{Example}
Let's come back to degree two difference equation
\begin{align}
	\left[ \sigma_q^2 + b(z,q) \sigma_q +d(z,q) \right]f=0, \label{degree-2-diiference-equation}
\end{align}
then the characteristic equation is 
\begin{align}
 x^2 +b(0,q) x +d(0,q) = 0.	
\end{align}
Suppose there are two roots (not necessary $q^r$--roots)
\begin{align}
	c_1, \quad c_2.
\end{align}

{\bf{Case 1}}
if  $c_1/c_2 \notin q^{\mathbb{Z}} $, then there are two solution of the form
\begin{align}
	e_{q,c_1} \sum_{n=0}^\infty f_n z^n , \quad e_{q,c_2} \sum_{n=0}^{\infty} g_n z^n.
\end{align}
The computation is similar as Example \ref{Example-1}.
\vspace{0.3cm}

{\bf{Case 2}}
if $c_1=c_2 q^{n_0}$, $n_0 \in \mathbb{Z}_+ $, then the first solution is of the form
\begin{align}
	e_{q,c_1} \sum_{n=0}^{\infty}f_n z^n.
\end{align}
Now let's construct the second solution of the following form
\begin{align}
\ell_q(z) e_{q,c_1} \sum_{n=0}^{\infty} f_n z^n + e_{q,c_2} \sum_{n=0}^{\infty}g_n z^n,	
\end{align}
substituting into (\ref{degree-2-diiference-equation}), we have
\begin{align}
	0&= (\ell_q(z) + 2 ) c_1^2 \cdot e_{q,c_1} \sum_{n=0}^{\infty} f_n q^{2n} z^n + (\ell_q(z)+1)c_1 \cdot e_{q,c_1} \sum_{n=0}^{\infty}f_n q^n z^n \sum_{n=0}^{\infty}b_n(q)z^n  \\
	&+\ell_q(z) e_{q,c_1} \sum_{n=0}^{\infty}f_n z^n \sum_{n=0}^{\infty}d_n(q)z^n   \\
	&+ c_2^2 \cdot e_{q,c_2} \sum_{n=0}^{\infty} g_n q^{2n}z^n + c_2 \cdot e_{q,c_2}\sum_{n=0}^{\infty}g_n q^n z^n \sum_{n=0}^{\infty}b_n(q) z^n \\ 
	&+ e_{q,c_2}\sum_{n=0}^{\infty}g_n q^n z^n \sum_{n=0}^{\infty}d_n(q) z^n.
\end{align}

\begin{itemize}
\item $\ell_q(z)$-term:	
\begin{align}
	0&=\ell_q(z)\cdot \left[ c_1^2 \cdot e_{q,c_1} \sum_{n=0}^{\infty} f_nq^{2n}z^n + c_1\cdot e_{q,c_1} \sum_{n=0}^{\infty} f_nq^nz^n \sum_{n=0}^{\infty} b_nz^n +e_{q,c_1} \sum_{n=0}^{\infty} f_nz^n \sum_{n=0}^{\infty}d_nz^n \right] \\
	&=\ell_q(z) \cdot e_{q,c_1} \cdot \sum_{n=0}^{\infty} \left[ (c_1^2q^{2n}+c_1b_0q^n+d_0)f_n + \sum_{k=0}^{n-1}\left( q^kb_{n-k} +d_{n-k} \right)f_k \right] z^n,
\end{align}
with initial condition 
\begin{align}
	(c_1^2+b_0c_1+d_0)f_0=0,
\end{align}
which is automatically satisfied, so $f_0$ is arbitrary.

\item The remaining term:
\begin{align}
0 &= 2c_1^2 \cdot e_{q,c_1} \sum_{n=0}^{\infty}f_nq^{2n}z^n + c_1 \cdot e_{q,c_1}\sum_{n=0}^\infty f_n q^n z^n \cdot \sum_{n=0}^{\infty}b_n(q) z^n	 \\
&+ c_2^2 \cdot e_{q,c_2} \sum_{n=0}^{\infty} g_nq^{2n}z^n + c_2 \cdot e_{q,c_2} \sum_{n=0}^{\infty} g_nq^nz^n\cdot \sum_{n=0}^{\infty}b_n(q)z^n   \\
&+e_{q,c_2}\sum_{n=0}^{\infty}g_n z^n \sum_{n=0}^{\infty} d_n(q)z^n. \label{remaining-term}
\end{align}
Since $c_1=c_2 q^{n_0}$, then
\begin{align}
	e_{q,c_1} = q^{\frac{n_0(n_0-1)}{2}}z^{n_0}e_{q,c_2}
\end{align}
so (\ref{remaining-term}) becomes
\begin{align}
0 &= 2c_1^2 \cdot q^{\frac{n_0(n_0-1)}{2}}z^{n_0}e_{q,c_2} \sum_{n=0}^{\infty}f_nq^{2n}z^n + c_1 \cdot q^{\frac{n_0(n_0-1)}{2}}z^{n_0}e_{q,c_2}\sum_{n=0}^\infty f_n q^n z^n \cdot \sum_{n=0}^{\infty}b_n(q) z^n	 \\
&+ c_2^2 \cdot e_{q,c_2} \sum_{n=0}^{\infty} g_nq^{2n}z^n + c_2 \cdot e_{q,c_2} \sum_{n=0}^{\infty} g_nq^nz^n\cdot \sum_{n=0}^{\infty}b_n(q)z^n   \\
&+e_{q,c_2}\sum_{n=0}^{\infty}g_n z^n \sum_{n=0}^{\infty} d_n(q)z^n	  \\
&=e_{q,c_2} \left\{ \sum_{n=0}^{\infty} \left[\left(2c_1^2\cdot q^{\frac{n_0(n_0-1)}{2}}q^{2n} + b_0c_1q^{\frac{n_0(n_0-1)}{2}}q^{n}  \right)f_n + \sum_{k=0}^{n-1} q^{\frac{n_0(n_0-1)}{2}}c_1f_kq^{k}b_{n-k}  \right]z^{n+n_0}   \right.  \\  
&+ \left. \sum_{n=0}^{\infty} z^n \left[ \left( c_2^2 q^{2n} + c_2 q^nb_0+d_0 \right)g_n + \sum_{k=0}^{n-1} \left(c_2q^kb_{n-k}  + d_{n-k} \right)g_k  \right]   \right\}. \label{n>n0}
\end{align}
\end{itemize}
Thus for $n < n_0$, $(c_2q^n)^2+b_0c_2q^n+d_0 \neq 0 $, we obtain
\begin{align}
	g_n = \frac{-1}{(c_2q^n)^2+b_0c_2q^n+d_0} \left[ \sum_{k=0}^{n-1} \left( c_2q^kb_{n-k} + d_{n-k} \right)g_k  \right].
\end{align}
For $n > n_0$, from (\ref{n>n0}) one could see that $g_n$ is determined by $\{f_0,\ldots, f_{n-n_0} \}$. \\

\noindent For $n=n_0$, from (\ref{n>n0}) we have
\begin{align}
0=q^{\frac{n_0(n_0-1)}{2}}(2c^2_1+b_0c_1)f_0 + \sum_{k=0}^{n_0-1}(c_2q^kb_{n-k}+d_{n-k})g_k, \label{f0}
\end{align}
since $c_1,c_2$ are two roots of
\begin{align}
	x^2 +b_0 x +d_0 =0,
\end{align}
then 
\begin{align}
b_0 = -(c_1+c_2),  \ d_0 = c_1c_2,	
\end{align}
so
\begin{align}
2c_1+b_0c_1=2c_1^2 - (c_1+c_2)c_1 = c_1^2 -c_1c_2.	
\end{align}
If $c_1 \neq c_2$, i.e. $n_0 \neq 0$, then from (\ref{f0}) we have
\begin{align}
	f_0 = -\frac{q^{\frac{n_0(n_0-1)}{2}}}{c_1(c_1-c_2)} \sum_{k=0}^{n_0-1}(c_2q^kb_{n-k}+d_{n-k})g_k,
\end{align}
so $f_0$ is determined by $g_0$. Thus we have two free parameters $g_0$ and $g_{n_0}$.  
\vspace{0.3cm}

{\bf{Case 3}} if $c=c_1=c_2$, i.e. double roots
\begin{align}
   0=x^2+b_0(q)x+d_0(q)=(x-c)^2,
\end{align}
so $b_0=-2c$ and $d_0=c^2$, now consider the solution of the form
\begin{align}
	\ell_q(z) e_{q,c} \sum_{n=0}^{\infty}f_nz^n + e_{q,c}\sum_{n=0}^{\infty}g_nz^n.
\end{align}                                                                                                                                                                                                                                                                                                                                                                                                                                                                                                                                                                                                                                                                                                                                                                                                                                                                                                                                                                                                                                                                                                                                                                                                                                                                                                                                                                                                                                                                                                                                                                                                                                                                                                                                                                                                                                                                                                                                                                                                                                                                                                                                                                                                                                                                                           
substituting into (\ref{degree-2-diiference-equation}), we have
\begin{align}
	&(\ell_q(z)+2)c^2e_{q,c}\sum_{n=0}^{\infty}f_n(q)q^{2n}z^n + (\ell_q(z)+1)c \cdot e_{q,c}\sum_{n=0}^{\infty}f_n(q)q^nz^n\sum_{n=0}^{\infty}b_n(q)z^n \\
	+& \ell_q(z)e_{q,c}\sum_{n=0}^{\infty}f_n(q)z^n\sum_{n=0}^{\infty}d_n(q)z^n \\
	+& c^2e_{q,c}\sum_{n=0}^{\infty}g_n(q)q^{2n}z^n +c \cdot e_{q,c} \sum_{n=0}^{\infty}g_n(q)q^nz^n\sum_{n=0}^{\infty}b_n(q)z^n+e_{q,c}\sum_{n=0}^{\infty}g_n(q)z^n\sum_{n=0}^{\infty}d_n(q)z^n.
\end{align}
\begin{itemize}
\item $\ell_q(z)$-term: 
\begin{align}
0&=\ell_q(z)  e_{q,c} \cdot \sum_{n=0}^{\infty} \left[ \left(q^{2n}c^2+cq^nb_0+d_0 \right)f_n + \sum_{k=0}^{n-1}\left(cq^kb_{n-k}+d_{n-k} \right)f_k \right]z^n,  \label{fn}	
\end{align}
with initial condition 
\begin{align}
(c^2+b_0c+d_0)f_0=0	
\end{align}
so $f_0$ is arbitrary. Since $(cq^n)^2+b_0(cq^n)+d_0 \neq 0$ for $n \geq 1 $, from (\ref{fn}) we have
\begin{align}
	f_n = \frac{-1}{q^{2n}c^2+cq^nb_0+d_0}\sum_{k=0}^{n-1}\left(cq^kb_{n-k}+d_{n-k} \right)f_k.
\end{align}	

\item The remaining term:
\begin{align}
0 &= e_{q,c}\sum_{n=0}^{\infty} \left[\left(2c^2q^{2n} + cq^nb_0  \right)f_n + \sum_{k=0}^{n-1}cq^kb_{n-k}f_k \right. \\
  & + \left.  \left(c^2q^{2n}+cq^nb_0+d_0 \right)g_n + \sum_{k=0}^{n-1} \left(rq^kb_{n-k} +d_{n-k}  \right)g_{k}   \right]z^n	
\end{align}
with initial condition 
\begin{align}
2r^2+b_0r=0
\end{align}
so $g_0$ is arbitrary. Then $g_n$ is determined by
\begin{align}
\{g_0, \ldots,g_{n-1} \} \ and \  \{f_0,\ldots,f_n \}.	
\end{align}
\end{itemize}
{\bf{Conclusion:}} 
\begin{itemize}
\item{Case 1}: if  $c_1/c_2 \notin q^{\mathbb{Z}} $, then there are two solution of the form
\begin{align}
	e_{q,c_1} \sum_{n=0}^\infty f_n z^n , \quad e_{q,c_2} \sum_{n=0}^{\infty} g_n z^n.
\end{align}

\item{Case 2}: if $c_1=c_2 q^{n_0}$, $n_0 \in \mathbb{Z}_{\geq 0} $, there are two solutions of the form
\begin{align}
	e_{q,c_1} \sum_{n=0}^{\infty}f_n z^n, \quad \ell_q(z) e_{q,c_1} \sum_{n=0}^{\infty} f_n z^n + e_{q,c_2} \sum_{n=0}^{\infty}g_n z^n,	
\end{align}
\end{itemize}
\end{Example}

\subsection{The $q$-hypergeometric equation}
Consider following degree 2 difference equation
\begin{align}
\left[ (1-\sigma_q)(1-q^r \sigma_q ) - z(1-q^\alpha \sigma_q)(1-q^\beta \sigma_q) \right] f =0.
\end{align}
Suppose $r \notin \mathbb{Z} $, then the characteristic equation is
\begin{align}
(1-x)(1-q^r x)=0,	
\end{align}
then we have two $q^{\mathbb{Z}}$-roots
\begin{align}
x=q^0 \ or \	x=q^{-r}.
\end{align}

For the first root $x=q^0$, let
\begin{align}
f=\sum_{n=0}^{\infty}f_n z^n	,
\end{align}
then 
\begin{align}
\sum_{n=0}^{\infty}f_n(1-q^n)(1-q^{n+r})z^n - \sum_{n=0}^{\infty}f_n(1-q^{n+\alpha})(1-q^{n+\beta})z^{n+1}=0	
\end{align}
thus we obtain
\begin{align}
f_n(1-q^n)(1-q^{n+r}) = f_{n-1}(1-q^{n-1+\alpha})(1-q^{n-1+\beta}), \label{hyper-relation}	
\end{align}
with initial condition
\begin{align}
f_0(1-q^0)(1-q^r)=0,	
\end{align}
for arbitrary $f_0$. From (\ref{hyper-relation}), we obtain
\begin{align}
\frac{f_n}{f_{n-1}} = \frac{(1-q^{\alpha+n-1})(1-q^{\beta+n-1})}{(1-q^n)(1-q^{r+n})}	,
\end{align}
so
\begin{align}
f_n &= \frac{f_n}{f_{n-1}} \cdots \frac{f_1}{f_0}= \frac{(1-q^\alpha)\cdots(1-q^{\alpha+n-1})(1-q^\beta)\cdots(1-q^{\beta+n-1}) }{(1-q)\cdots(1-q^n)(1-q^{r+1})\cdots(1-q^{r+n})}	 \\
    &= \frac{(q^\alpha;q)_n(q^\beta;q)_n}{(q;q)_n(q^{r+1};q)_n}.
\end{align}
Here we use the $q$-Pochhammer symbol
\begin{align}
(a;q)_n := (1-a)\cdots (1-aq^{n-1}).	
\end{align}
So we obtain first solution
\begin{align}
	F_1 = \sum_{n=0}^{\infty} \frac{(q^\alpha;q)_n(q^\beta;q)_n}{(q;q)_n(q^{r+1};q)_n} z^n,
\end{align}
this is $q$-hypergeometric series.

For another root $x=q^{-r}$, similarly, we obtain a solution as follows
\begin{align}
F_2 = \sum_{n=0}^{\infty}\frac{(q^{\alpha-r};q)_n(q^{\beta-r};q)_n}{(q;q)_n(q^{1-r};q)_n}	z^n.
\end{align}

\begin{Remark}
Generalized $q$-hypergeometric series is as follows
\begin{align}
{}_r\phi_s(a_1,\ldots,a_r;b_1,\ldots,b_s;q;x )=\sum_{n=0}^{\infty} \frac{(a_1;q)_n\cdots(a_r;q)_n}{(b_1;q)_n \cdots (b_s;q)_n}. ((-1)^nq^{\frac{n(n-1)}{2}})^{1+s-r} z^n	
\end{align}
It satisfies the following difference equation
\begin{align}
	\left[ \prod_{i=1}^s (1-b_i\sigma_q) - z \left(-\sigma_q \right)^{1+s-r} \prod_{i=1}^r (1-a_i\sigma_q)  \right] G = 0.
\end{align}	
This type of difference equations often appear in quantum K-theory, especially, $I$-function with level structures, see \cite{RZ} for more details.
\end{Remark}

\subsection{Difference equation for $\mathbb{P}^1$ with standard level structure ($0 \leq l \leq 2$)} \label{sub-proj}
As we mentioned in the introduction, the modified $I$-function of $\mathbb{P}^1$ with level structure is
\begin{align} 
\widetilde{I^{K, l}_{\mathbb{P}^1}} = 	P^{\ell_q(z)}\sum_{d=0}^{\infty} \frac{\left(P^{d}q^{\frac{d(d-1)}{2}} \right)^l z^d}{\prod_{k=1}^{d} (1-Pq^k)^2 },
\end{align}
satisfying the following difference equation
\begin{align}
\left[ (1-\sigma_q)^2 -z  \sigma_q^l  \right] \widetilde{I^{K,St}_{\mathbb{P}^1}} = 0.	
\end{align}
If $0 \leq l \leq 2$, then the above difference equation is regular singular. 

The characteristic equation is
\begin{align}
(1-x)^2 = 0,	
\end{align}
then we have double roots 
\begin{align}
x_1=x_2=q^0.	
\end{align}
From (\ref{sol-resonant-0})-(\ref{sol-resonant-ki}) we know there exist two solutions of the following form
\begin{align}
 F_1(z,q), \quad \ell_q(z) F_1(z,q) + F_2(z,q),	
\end{align}
where
\begin{align}
F_i(z,q) = \sum_{k=0}^{\infty} f_{ik}(q)z^k, {\rm with} \ f_{i0}=1, \ i=1,2.
\end{align}
As before, one can easily find that
\begin{align}
F_1(z,q) = \sum_{d=0}^{\infty} \frac{\left(q^{\frac{d(d-1)}{2}} \right)^l z^d}{\prod_{k=1}^{d} (1-q^k)^2 }.	
\end{align}
Substituting the second solution into the difference equation, we have
\begin{align}
\left[  2\sigma_q(\sigma_q -1) - lz\sigma_q^l \right] F_1(z,q) =\left[z\sigma_q^l-(1-\sigma_q)^2 \right] F_2(z,q).	
\end{align}
Then we obtain the following recursive formula for $F_2(z,q)$:
\begin{align}
f_{2,d}(q) = \frac{q^{l(d-1)}}{(1-q^d)^2} f_{2,d-1}(q) + \frac{q^{ld(d-1)/2}}{\prod_{k=1}^d(1-q^k)^2}	\left( \frac{2q^d}{1-q^d} + l  \right)
\end{align}
So we have
\begin{align}
 & \ell_q(z) F_1(z,q) + F_2(z,q) \nonumber \\
=& \sum_{d=0}^\infty \frac{q^{\frac{ld(d-1)}{2}}}{\prod_{k=1}^d(1-q^k)^2} \left(\ell_q(z) - \sum_{k=1}^d \frac{2q^k}{1-q^k}  \right) + 	\sum_{d=0}^\infty \frac{ld \cdot q^{\frac{ld(d-1)}{2}}}{\prod_{k=1}^d(1-q^k)^2}.
\end{align}

\subsection{Convergence of solutions} \label{reg-convergence}
In this subsection, we prove the convergence of solutions in reuglar singular cases. Here we follow \cite{Adams}.

Let's consider the regular singular equation
\begin{align}
\sum_{k=0}^n a_k(z) (\sigma_q)^k f(z)=0,	
\end{align}
with
\begin{align}
a_k(z)= a_{k0}+a_{k1}z+a_{k2}z^2 + \cdots,	
\end{align}
i.e., the coefficients $a_k(z)$ satisfy conditions in Proposition \ref{critieria-regular}. 
\vspace{0.3cm}

\noindent {\bf Assumptions:}
\begin{itemize}
\item[($\dagger$)] We assume that $a_{00}, a_{n0} \neq 0$, without loss of generality we may assume $a_n(z) \equiv 1 $.
\item[($\dagger\dagger$)] It will be assumed further that all of the power series $a_k(z)$ to be analytic at the origin and have radius of convergence $>1$.	
\end{itemize}

To see the convergence of solutions, it's sufficient to show the following two solutions in resonant case:
\begin{align}
S_{i0}(z,q)&=e_{q,c_i}F_{i0}(z,q), \label{convergence-solution-1} \\
	S_{i1}(z,q)&=\ell_q(z) S_{i0}(z,q)+ e_{q,c_iq^{m_{i1}}}F_{i1}(z,q),	\label{convergence-solution-2}
\end{align}
are convergent. We begin by proving directly the convergence of the series of the first one. 

Suppose
\begin{align}
	F_{i0}(z,q)=  \sum_{d=0}^\infty f_{i,0,d}(q) z^d,
\end{align}
since $\sigma_q e_{q,c_i}(z)=c_i \cdot e_{q,c_i}(z)$, then
\begin{align}
	\sum_{k=0}^n c_i^k \cdot a_k(z)  (\sigma_q)^k F_{i0}(z,q)=0.
\end{align}
i.e.,
\begin{align}
	\sum_{k=0}^n c_i^k \cdot \left(\sum_{j=0}^\infty a_{kj} z^j \right) \left( \sum_{d=0}^\infty q^{kd} \cdot f_{i,0,d} \cdot z^{d}  \right) =0.
\end{align}
Then the coefficient of $z^m$-term is 
\begin{align}
\sum_{j+d=m} \sum_{k=0}^n \left( c_i^k \cdot q^{kd} \cdot a_{kj} \right) f_{i,0,d}	= 0.
\end{align}
In the notation at the beginning of this section, $m_0=m_{i,k_i}+1$ is the first positive integer that 
\begin{align}
a_{00}+ a_{10} (c_iq^{m_0}) + \ldots, a_{n0}(c_iq^{m_0})^n \neq 0.	
\end{align}
By deduction, one could let $f_{i,0,d}=0$ for $0 \leq d < m_0 $, and $f_{i,0,m_0} = 1$. Let
\begin{align}
L_{ijd} = \sum_{k=0}^n \left(  c_i^k \cdot q^{kd} \cdot a_{kj} \right).	
\end{align}
Then the coefficient $f_{i,0,m}$ is determined by the relation
\begin{align}
f_{i,0,m} = -\frac{\sum_{j=1}^m L_{i,j,m-j} \cdot f_{i,0,m-j}}{L_{i,0,m}}, \quad m > m_0.	
\end{align}
Considering first the numerator of this quotient, by the assumption ($\dagger$), $a_n(z) \equiv 1$, then
\begin{align}
|L_{i,0,m}|	= |(c_iq^m)^n| \cdot | 1 + \frac{a_{n-1,0}}{c_iq^m} + \ldots + \frac{a_{00}}{(c_iq^m)^n} |,
\end{align}
let $m=m_1$ be large enough that the second factor on the right is $> \frac{1}{2}$. For $m_0 \leq m< m_1$, we have $|L_{i,0,m}| = A_m |(c_iq^m)^n| $, where $A_m \neq 0$. Setting 
\begin{align}
A = {\rm min} \big\{ A_{m_0}, \ldots, A_{m_1-1}, \frac{1}{2} \big\}.	
\end{align}
And by the assumption ($\dagger\dagger$), $a_{i}(z)$ are convergent for $z=1$, therefore, we have
\begin{align}
|a_{ij}| < M_1.	
\end{align}
The set $|c_i^k|$, for $k=0,1, \ldots, n$ have an upper bound $M_2$, then for $j\geq 1 $, we have
\begin{align}
L_{ijm}& = \sum_{k=0}^n \left( (c_iq^m)^k \cdot a_{kj} \right) \\
       & \leq n M_1M_2 |q|^{m(n-1)}, 
\end{align}
Then we have
\begin{align}
|f_{i,0,m}| < \frac{nM_1M_2|q|^{(m-1)(n-1)}\sum_{j=0}^{m-1}|f_{i,0,j}|}{A|(c_iq^m)^n|} .
\end{align}
Defining $\widetilde{f_{i,0,0}}$ as $|f_{i,0,0}|$, we obtain the following upper bound for $f_{i,0,m}$:
\begin{align}
|f_{i,0,m}| < \frac{nM_1M_2\sum_{j=0}^{m-1}|\widetilde{f}_{ij}|}{A|c_i^n | | q |^{n+m-1}}=: \widetilde{f}_{i,0,m}	.	
\end{align}
As $m$ becomes infinite the limit of
\begin{align}
\frac{\widetilde{f}_{i,0,m}}{\widetilde{f}_{i,0,m+1}} = 	\frac{|q|\sum_{j=0}^{m-1}|{\widetilde{f}_{i,0,j}|}}{\widetilde{f}_{im}+ \sum_{j=0}^{m-1}|{\widetilde{f}_{i,0,j}|} } = \frac{|q|}{nM/(|q|^{n+k-1}+1)},
\end{align}
is $|q|$, where $M = M_1M_2/(A|c_i^n|)$. It implies the convergence of the power-series solution (\ref{convergence-solution-1}).

Let us next consider the second solution (\ref{convergence-solution-2}), i.e.
\begin{align}
S_{i1}(z,q)&=\ell_q(z) e_{q,c_i}F_{i0}(z,q)+ e_{q,c_iq^{m_{i1}}}F_{i1}(z,q),
\end{align}
and 
\begin{align}
e_{q,c_iq^{m_{i1}}}(z) = q^{\frac{m_{i1}(m_{i1}-1)}{2}} z^{m_{i1}} \cdot e_{q,c_i}(z).	
\end{align}
Setting $q_0 = q^{\frac{m_{i1}(m_{i1}-1)}{2}}$, then from the difference equation, we obtain
\begin{align}
& \sum_{k=0}^n \left( \sum_{j=0}^\infty a_{kj} z^j \right) k c^k_i \left(\sum_{d=0}^\infty q^{kd} f_{i,0,d} z^d    \right)	  \\
+& q_0 \sum_{k=0}^n \left( \sum_{j=0}^\infty a_{kj} z^j \right) c_i^k \left(\sum_{d=0}^\infty q^{k(d+m_{i1})} f_{i,1,d} z^{d+m_{i1}}    \right) = 0.
\end{align}
Setting
\begin{align}
L^{\prime}_{ijd}        &= \sum_{k=0}^n \left( k c_i^k \cdot q^{kd} \cdot a_{kj} \right),
\end{align}
we find that the $f_{i,1,m}$ satisfy the relations
\begin{align}
f_{i,1,m-m_{i1}} = -\frac{L^{\prime}_{i,0,m} \cdot f_{i,0,m}+ \sum_{j=1}^{m-1}\left[L^{\prime}_{i,j,m-j} \cdot f_{i,0,m-j} + q_0 L_{i,j,m-j} \cdot f_{i,1,m-j-m_{i1}}  \right]}{q_0 L_{i,0,m}}.	
\end{align}
Here we use the notation that $f_{i,1,m}=0$ for $m < 0$. By deduction, one could let $f_{i,1,d}=0$ for $0 \leq d < m_0-m_{i1} $, and $f_{i,1,m_0-m_{i1}} = 1$. Proceeding as before, we find
\begin{align}
|f_{i,1,m-m_{i1}}| &< \frac{n M \sum_{j=1}^{m-1}\left| f_{i,1,j-m_{i1}} \right|}{ |q|^{n+m-1}}+\frac{n(n-1) M \sum_{j=1}^{m-1}\left|f_{i,0,j}\right|}{|q|^{n+k-1}}+\frac{n(n+1) M\left|f_{i,0,m}\right|}{2}	, \\
                   &< \frac{n^2(n+1)M^2\sum_{j=0}^{m-1}\left[ |f_{i,1,j-m_{i1}}| + 2|f_{i,0,j}| \right]}{|q|^{n+m-1}}.
\end{align}
To obtain the last relation, we use the inequality for $f_{i,0,m}$. Similarly, defining $\widetilde{f}_{i,1,0}$ as $|{f}_{i,1,0}|$ and $\widetilde{f}_{i,1,m}$ as before, then we have
\begin{align}
\frac{\widetilde{f}_{i,1,m+1-m_{i1}}}{\widetilde{f}_{i,1,m-m_{i1}}} = \frac{1}{|q|} \cdot  \left[ \frac{n^2(n+1)M^2}{|q|^{n+m-1}} +1 + \frac{2nM}{|q|^{n+m-1}} \frac{\sum_{j=0}^{m-1}\widetilde{f}_{i,0,j}}{\sum_{j=0}^{m-1}\left(\widetilde{f}_{i,1,j-m_{i1}}+2\widetilde{f}_{i,0,j} \right)} \right]	.
\end{align}
The series $\sum_{j=0}^\infty \widetilde{f}_{i,0,j}$ is convergent. As $m$ tends to be infinite, $\sum_{i=0}^{m-1}\widetilde{f}_{i,1,j-m_{i1}}$ must either approach a limit or become infinite, in either case the limit of the above quotient is $1/|q|$. Hence it proofs the convergence of the second solution.

\section{Local solutions for irregular singualr cases}
In the last section we show how to construct solutions in regular singular case. In this section we focus on irregular case, and we follow the method of \cite{Adams2} and show how to construct formal series solutions in some irregular singular degree 2 difference equations.

Let's consider the following degree 2 difference equation
\begin{align}
	\left[a_2(z,q)\sigma _q^2 + a_1(z,q)\sigma_q + a_0(z,q)  \right]f=0,
\end{align}
with
\begin{align}
	a_i(z,q) = a_{i0}+a_{i1}z + a_{i2}z^2 + \cdots, \ i=0,1,2,
\end{align}
for one of $a_{i0} \neq 0$ and whose characteristic equation is
\begin{align}
a_{20}z^2 + a_{10}z+a_{00}=0	.
\end{align}
From Proposition \ref{critieria-regular}, we know that irregularity implies that one or both of $a_{20}, a_{00}=0$.

\begin{Example}[Ramanujan Equation]
Let's consider the following Ramanujan equation
\begin{align}
	\left[ qz \sigma_q^2 - \sigma_q +1 \right]f=0 \label{Ramanujan-eqn},
\end{align}	
whose characteristic equation is
\begin{align}
	-x+1=0 \  i.e. \ x=1=q^0.
\end{align}
Consider a solution of the form
\begin{align}
	F=\sum_{n=0}^{\infty} f_n(q)z^n,
\end{align}
substituting into (\ref{Ramanujan-eqn}), we have
\begin{align}
	0 &=qz\sum_{n=0}^{\infty}f_nq^{2n}z^n - \sum_{n=0}^{\infty}f_nq^nz^n + \sum_{n=0}^{\infty}f_nz^n \\
	  &=\sum_{n=1}^{\infty}f_{n-1}q^{2n-1}z^n - \sum_{n=0}^{\infty}f_nq^nz^n + \sum_{n=0}^{\infty} f_nz^n.
\end{align} 
Therefore, we have
\begin{align}
	f_{n-1} q^{2n-1} - (q^n-1)f_n = 0,
\end{align}
with initial condition $(1-1)f_0=0$, so $f_0$ is a free parameter. Then
\begin{align}
	\frac{f_n}{f_{n-1}} = - \frac{q^{2n-1}}{1-q^n} = - \frac{q^{2(n-1)+1}}{1-q^n},
\end{align}
i.e.
\begin{align}
	f_n &= \frac{f_n}{f_{n-1}}\cdot\frac{f_{n-1}}{f_{n-2}}\cdots \frac{f_1}{f_0} \\
	    &=\frac{-q^{2n-1}}{1-q^n} \cdot \frac{-q^{2n-3}}{1-q^{n-1}} \cdots \frac{-q}{1-q} \\
	    &= \frac{(-1)^{n}(q^{\frac{n(n-1)}{2}})^2q^n}{(q;q)_q}.
\end{align}
Then the first solution is of the form
\begin{align}
	F=\sum_{n=0}^{\infty}\frac{(q^{\frac{n(n-1)}{2}})^2}{(q;q)_n}(-qz)^n = {}_0 \phi_1 (-,0;q;-qx),
\end{align}
where 
\begin{align}
	{}_r\phi_s(a_1,\ldots,a_r;b_1,\ldots,b_s;q;x ) := \sum_{n=0}^{\infty} \frac{(a_1;q)_n\cdots(a_r;q)_n}{(b_1;q)_n\cdots (b_s;q)_n}((-1)^nq^{\frac{n(n-1)}{2}})^{1+s-r}z^n.
\end{align}
Let's consider another solution of the form
\begin{align}
	\theta^{-1}_q(z) \sum_{n=0}^{\infty} f_n(q)z^n,
\end{align}
substituting into (\ref{Ramanujan-eqn}), we obtain
\begin{align}
	0&=qz \cdot q^{-1}z^{-2} \theta^{-1}_q(z) \sum_{n=0}^{\infty}f_n(q)q^{2n}z^n- z^{-1}\theta^{-1}_q(z)\sum_{n=0}^{\infty}f_nq^nz^n + \theta^{-1}_q(z)\sum_{n=0}^{\infty}f_nz^n \\
	 &=\theta^{-1}_q(z)\sum_{n=0}^{\infty}f_n(q)q^{2n}z^{n-1} - \theta^{-1}_q(z) \sum_{n=0}^{\infty}f_nq^nz^{n-1} + \theta^{-1}_q(z) \sum_{n=0}^{\infty}f_nz^n \\
	 &=\theta^{-1}_q(z)\sum_{n=0}^{\infty} \left( f_{n+1}(q)q^{2(n+1)} - f_{n+1}(q)q^{n+1}+f_n \right)z^n,
\end{align}
with initial condition
\begin{align}
f_0(q^0-q^0)=0.	
\end{align}
For $n > 0 $, we have
\begin{align}
	f_{n+1}(q^{2(n+1)-q^{n+1}}-q^{n+1})+f_n = 0,
\end{align}
then
\begin{align}
	\frac{f_{n+1}}{f_{n}} = \frac{1}{q^{n+1}-q^{2(n+1)}} = \frac{1}{q^{n+1}(1-q^{n+1})},
\end{align}
so
\begin{align}
f_n = f_0 \frac{1}{q^n(1-q^n)} \cdots \frac{1}{q(1-q)} = \frac{(q^{\frac{n(n-1)}{2}})^{-1}}{(q;q)_n} \frac{1}{q^n}.	
\end{align}
To summarize what has been mentioned above, we obtain the second solution
\begin{align}
F_2 = \theta^{-1}_q(z) \cdot {}_2\phi_0 (0,0;-;q;-\frac{x}{q})	
\end{align}
\end{Example}
Here we use the Jacobi's theta function introduced in section 2.2. This is not a coincidence, in the following we show how to do in general.

\subsection{General Technique: Newton polygon}
Let's consider the equation
\begin{align}
\sum_{i=0}^n a_i(z) (\sigma_q)^i f(z)=0,	
\end{align}
with
\begin{align}
a_i(z)= a_{i0}+a_{i1}z+a_{i2}z^2 + \cdots.	
\end{align}
Denoted by $a_{i,j_i}$ the first nonzero coefficient in $a_i(z)$, and choosing $i-$ and $j-$axes as horizontal and vertical axes respectively, plot the points $(n-i, j_i)$. Construct a broken line, convex downward, such that both ends of each segment of the line are points of the set $(n-i, j_i)$. Then we obtain a Newton polygon as follows
\begin{center}
\begin{tikzpicture}
\draw [xstep=1, ystep=1, draw=gray](0,0)grid(5,5);
\draw [->, very thick](0,0)--(5,0);
\draw [->, very thick](0,0)--(0,5);

\draw [very thick, draw=red](0,4)--(1,1); 
\draw [very thick, draw=red](1,1)--(2,0);
\draw [very thick, draw=red](2,0)--(4,0);
\draw [very thick, draw=red](4,0)--(5,2); 
\end{tikzpicture}
\end{center}
Note that the horizontal segment corresponds to the characteristic equation	
\begin{align}
a_{k,0}x^k+ a_{k-1,0} x^{k-1} + \cdots + a_{d,0}x^d = 0. 
\end{align}
The degree of the above characteristic equation is 1 less than the number of points that lie on or above that segment.

Then the general technique to construct solutions is as follows
\begin{itemize}
\item{\bf{Horizontal segment:}} As mention above, it corresponds to characteristic equation.  Using the non-zero roots, we could construct the associated solutions as regular singular cases.
\item{\bf{Non-horizontal segment:}} For each non-horizontal segment of slope $\mu$, a rational number.
\begin{itemize}
\item If $\mu = r$ is an integer, we consider a formal series solution of the form
\begin{align}
	\theta^{r}_q(z)\sum_{n=0}^{\infty} f_n(q)z^n.
\end{align}

\item If $\mu = t/s$ is a rational number with $s$ positive, then we consider a formal series solution of the form
\begin{align}
   \theta_q^{t/s}(z) \sum_{n=0}^{\infty} f_{n}(q) z^{n/s}.	
\end{align}
\end{itemize}
 
\end{itemize}

\begin{Example}[Slope $\mu= -1$] \label{slope=-1}
Consider the following difference equation
\begin{align}
	\left[ z^2\sigma_q^2 + z \sigma_q +1 \right]f=0.
\end{align}	
The Newton polygon is 
\begin{center}
\begin{tikzpicture}
\draw [xstep=1, ystep=1, draw=gray](0,0)grid(3,3);
\draw [->, very thick](0,0)--(3,0);
\draw [->, very thick](0,0)--(0,3);

\draw [very thick, draw=red](0,2)--(2,0); 
\end{tikzpicture}
\end{center}
with slope $\mu=-1$, then consider the solution of the form
\begin{align}
	\theta^{-1}_q F(z).
\end{align}
Then we obtain a new difference equation for $F(z)$ as follows
\begin{align}
	\left[ q^{-1}\sigma_q^2 + \sigma_q + 1 \right] F(z) =0,
\end{align}
with characteristic equation
\begin{align}
	q^{-1}x^2+x+1=0.
\end{align}
Suppose the roots are $c_1$ and $c_2$, so new difference equation could be rewritten as
\begin{align}
	( \sigma_q - c_1 )(\sigma_q-c_2)F=0,
\end{align}
so it's easy to construct two solutions
\begin{align}
	\theta^{-1}_q(z)e_{q,c_1}(z), \quad \theta^{-1}_q(z)e_{q,c_2}(z).
\end{align}
\end{Example}

\begin{Example}[Slope $\mu=-2$] \label{slope=-2}
Consider the following difference equation
\begin{align}
	\left[z^2\sigma_q^2 - \sigma_q +1  \right]f=0. \label{April-27-diff-eqn}
\end{align}
The Newton polygon of (\ref{April-27-diff-eqn}) is as follows
\begin{center}
\begin{tikzpicture}
\draw [xstep=1, ystep=1, draw=gray](0,0)grid(3,3);
\draw [->, very thick](0,0)--(3,0);
\draw [->, very thick](0,0)--(0,3);

\draw [very thick, draw=red](0,2)--(1,0); 
\draw [very thick, draw=red](1,0)--(2,0); 
\end{tikzpicture}
\end{center}
The first segment's slope is $-2$, and the second horizontal segment corresponds to the characteristic equation
\begin{align}
	-x+1=0,
\end{align}
then consider solution of the following form
\begin{align}
	F_1 = \sum^{\infty}_{n=0} f_n z^n,
\end{align}
substituting into (\ref{April-27-diff-eqn}), we obtain
\begin{align}
	0 &= \sum_{n=0}^{\infty}f_n q^{2n} z^{n+2} - \sum_{n=0}^{\infty}f_n q^n z^n + \sum_{n=0}^{\infty} f_n z^n \\
	  &= \sum_{n=2}^{\infty}f_{n-2}q^{2n-4}z^n + \sum^{\infty}_{n=0} f_{n}(1-q^n)z^n, \label{fn-2}
\end{align}
with initial conditions
\begin{align}
	& f_0(1-q^0)=0, \\
	& f_1(1-q^1)=0,
\end{align}
i.e. $f_1=0$ and $f_0$ could be arbitrary. For $n \geq 1$, from (\ref{fn-2}) we have
\begin{align}
	\frac{f_{2n}}{f_{2n-2}}= \frac{-q^{4(n-1)}}{1-q^{2n}}.
\end{align}
Thus
\begin{align}
	f_{2n} &= \frac{-q^{4(n-1)}}{1-q^{2n}} \cdot \frac{-q^{4(n-2)}}{1-q^{2(n-1)}} \cdots \frac{-1}{1-q^2} \cdot f_0 \\
	       &=\frac{(-1)^n\left( q^{\frac{n(n-1)}{2}} \right)^4}{(q^2;q^2)_n}\cdot f_0,
\end{align}
taking $f_0=1$, we finally obtain
\begin{align}
	F_1(z) = \sum^{\infty}_{n=0} \frac{\left( q^{\frac{n(n-1)}{2}} \right)^4}{(q^2;q^2)_n} (-1)^n z^{2n}.
\end{align}

Now let's construct the second solution, consider solution of the following form
\begin{align}
F_2=\theta^{-2}_q(z)F(z),	
\end{align}
substituting into (\ref{April-27-diff-eqn}), we obtain
\begin{align}
	0 &= z^2\sigma_q^2(\theta_q^{-2}(z)F(z)) - \sigma_q(\theta_q^{-2}(z)F(z)) + \theta^{-2}_q(z)F(z)    \\
	  &=z^2q^{-2}z^{-4}\theta_q^{-2}(z) \cdot \sigma_q^2 F(z) -z^{-2}\theta_q^{-2}(z) \sigma_q F(z) + \theta_q^{-2}(z) F(z)    \\
	  &= z^{-2} \theta_q^{-2}(z) \cdot \left[ q^{-2}\sigma_q^2 - \sigma_q +z^2  \right] F(z).
\end{align}
So we obtain a new difference equation 
\begin{align}
	\left[ q^{-2}\sigma_q^2 - \sigma_q +z^2  \right] F = 0, \label{new-eqn}
\end{align}
with characteristic equation 
\begin{align}
	q^{-2}x^2 - x = 0.
\end{align}
The roots are $x=0$ or $x=q^2$. For $x=q^2$, consider the following solution
\begin{align}
	F = \sum_{n=0}^{\infty} f_n z^{n+2}, \label{April-27-solution}
\end{align}
substituting (\ref{April-27-solution}) into (\ref{new-eqn}), we have
\begin{align}
	0=\sum_{n=0}^{\infty}f_nq^{2n+2}z^{n+2} - \sum_{n=0}^{\infty}f_nq^{n+2}z^{n+2} + \sum_{n=0}^{\infty}f_n z^{n+4}.
\end{align}
Thus we obtain
\begin{align}
	\frac{f_{n+2}}{f_n} = \frac{1}{q^{n+4}(1-q^{n+2})},
\end{align}
with initial conditions
\begin{align}
	& f_0 (q^2-q^2)=0, \\
	& f_1 (q^4-q^3)=0,
\end{align}
so we know $f_{2n+1}=0$ and 
\begin{align}
	f_{2n} = \frac{q^{-2}}{q^{2n}(1-q^{2n})} \cdots \frac{q^{-2}}{q^2(1-q^2)} \cdot f_0.
\end{align}
Taking $f_0=1$, we conclude that the solution of (\ref{new-eqn}) is as follows
\begin{align}
	F =  \sum_{n=0}^{\infty} \frac{\left(q^{\frac{n(n+1)}{2}}  \right)^{-2}}{(q^2;q^2)_n} (q^{-2}z)^n. 
\end{align}
\begin{Remark}
In general, for a root $x=c$, we consider the solution of the form
\begin{align}
	F = e_{q,c}(z) \sum_{n=0}^\infty f_n z^n,
\end{align}
as we did in the regular cases. If $c$ is a $q^{\mathbb{Z}}$-root, i.e. $c=q^n$, then 
\begin{align}
	e_{q,q^n}(z) = q^{\frac{n(n-1)}{2}}z^n.
\end{align}
\end{Remark}

For another root $x=0$, we can not construct a new solution from it. Indeed, if we consider a solution of the form
\begin{align}
	G= \sum_{n=0}^{\infty}g_nz^n,
\end{align}
substituting into (\ref{new-eqn}), we obtain
\begin{align}
	0 &= \sum_{n=0}^{\infty} g_n q^{2n-2}z^n - \sum^{\infty}_{n=0}g_nq^nz^n + \sum_{n=0}^{\infty}g_nz^n \\
	  &=\sum_{n=0}^{\infty} q^n(q^{n-2}-1)g_nz^n = \sum_{n=2}^{\infty}g_{n-2}z^n,
\end{align}
with initial conditions
\begin{align}
q^0(q^{-2}-1)g_0 &= 0,	\\
q^1(q^{-1}-q)g_1 &= 0,   \\
q^2(q^{0}- 1)g_2 &= 0,  
\end{align}
so $g_0=g_1=0$ and $g_2$ could be arbitrary. Then for $n \geq 2 $, we have
\begin{align}
\frac{g_{2n+2}}{g_{2n}} = \frac{1}{q^{2n+2}(1-q^{2n})},	
\end{align}
which is the same as root $x=q^2$ case. 

In summary, for the degree 2 difference equation
\begin{align} 
\left[z^2\sigma_q^2 - \sigma_q +1  \right]f=0,	
\end{align}
we can construct two solutions
\begin{align}
	 F_1(z) &= \sum^{\infty}_{n=0} \frac{\left( q^{\frac{n(n-1)}{2}} \right)^4}{(q^2;q^2)_n} (-1)^n z^{2n}, \\
	 F_2(z) &= \theta_q^{-2}(z) \sum_{n=0}^{\infty} \frac{\left(q^{\frac{n(n+1)}{2}}  \right)^{-2}}{(q^2;q^2)_n} (q^{-2}z)^n.
\end{align}

\end{Example}

\begin{Example}[Slope $\mu=-1/2$]
Consider following difference equation
\begin{align}
	\left[ z\sigma_q^2-1 \right]f=0, \label{equ-1/2}
\end{align}	
then the associated Newton polygon is as follows
\begin{center}
\begin{tikzpicture}
\draw [xstep=1, ystep=1, draw=gray](0,0)grid(3,3);
\draw [->, very thick](0,0)--(3,0);
\draw [->, very thick](0,0)--(0,3);

\draw [very thick, draw=red](0,1)--(2,0); 
\end{tikzpicture}
\end{center}
with only one segment of slope $-\frac{1}{2}$, then consider the solution of the form
\begin{align}
	\theta_q^{-\frac{1}{2}}(z)F(z^{1/2}),
\end{align}
substituting into (\ref{equ-1/2}), we have
\begin{align}
	& z\left(q^{1\frac{1}{2}}z^{-1}\theta_q^{-\frac{1}{2}}\sigma_q^2F(z^{1/2}) \right)-\theta_q^{-\frac{1}{2}}F(z^{1/2})=0.
\end{align}
So we obtain a new difference equation
\begin{align}
	\left[ q^{-\frac{1}{2}}\sigma_q^2-1 \right]F(z^{1/2})=0,
\end{align}
with characteristic equation
\begin{align}
	q^{-\frac{1}{2}}x^2-1=0,
\end{align}
the two roots are $x=q^{\frac{1}{4}}$ and $x=-q^\frac{1}{4}$. Then one could consider the solutions of the form
\begin{align}
	& F_1 = e_{q,q^{\frac{1}{4}}}(z) \sum_{n=0}^{\infty}f_n z^{n/2}, \\
	& F_2 = e_{q,-q^{\frac{1}{4}}}(z) \sum_{n=0}^{\infty}f_n z^{n/2},
\end{align}
by a little bit computation, one could find these solutions are
\begin{align}
& F_1 = e_{q,q^{\frac{1}{4}}}(z), 	\\
& F_2 = e_{q,-q^{\frac{1}{4}}}(z).
\end{align}
Then the solutions of original equation are
\begin{align}
	\theta_q^{-\frac{1}{2}}(z)e_{q,q^{\frac{1}{4}}}(z), \quad \theta_q^{-\frac{1}{2}}(z)e_{q,-q^{\frac{1}{4}}}(z).
\end{align}
\end{Example}

\subsection{Difference equation for $\mathbb{P}^1$ with level structure ($l \geq 2$)} \label{sub-proj-irregular}
As we mentioned in subsection \ref{sub-proj}, the modified $I$-function of $\mathbb{P}^1$ with level structure is
\begin{align} 
\widetilde{I^{K, l}_{\mathbb{P}^1}} = 	P^{\ell_q(z)}\sum_{d=0}^{\infty} \frac{\left(P^{d}q^{\frac{d(d-1)}{2}} \right)^l z^d}{\prod_{k=1}^{d} (1-Pq^k)^2 },
\end{align}
satisfying the following difference equation
\begin{align}
\left[ (1-\sigma_q)^2 -z  \sigma_q^l  \right] \widetilde{I^{K,St}_{\mathbb{P}^1}} = 0.	\label{dif-l>2}
\end{align}
Here we consider $l >2$, then the above difference equation is irregular singular. 

The associated Newton polygon is as follows
\begin{center}
\begin{tikzpicture}
\draw [xstep=1, ystep=1, draw=gray](0,0)grid(3,3);
\draw [->, very thick](0,0)--(3,0);
\draw [->, very thick](0,0)--(0,3);
\draw [very thick, draw=red](0,1)--(2,0); 
\draw [very thick, draw=red](2,0)--(3,0); 
\end{tikzpicture}
\end{center}
There are two segments, one is of slope $-1/(l-2)$ and another one is horizontal. The characteristic equation with respect to the horizontal segment is 
\begin{align}
	(1-x)^2=0,
\end{align}
the same as the $0 \leq l \leq 2$ (regular singular) case in subsection \ref{sub-proj}. Thus there are two solutions as before
\begin{align}
\sum_{d=0}^{\infty} \frac{\left(q^{\frac{d(d-1)}{2}} \right)^l z^d}{\prod_{k=1}^{d} (1-q^k)^2 },
\end{align}
and
\begin{align}
 \sum_{d=0}^\infty \frac{q^{\frac{ld(d-1)}{2}}}{\prod_{k=1}^d(1-q^k)^2} \left(\ell_q(z) - \sum_{k=1}^d \frac{2q^k}{1-q^k}  \right) + 	\sum_{d=0}^\infty \frac{ld \cdot q^{\frac{ld(d-1)}{2}}}{\prod_{k=1}^d(1-q^k)^2}.
\end{align}

For the segment of slope $\mu=-1/(l-2)$, we consider solutions of the form
\begin{align}
\theta_q^{-\frac{1}{(l-2)}}(z) F(z^{1/(l-2)},q).	
\end{align}
Let $Q=z^{1/(l-2)}$, $p=q^{1/(l-2)}$ and $\sigma_p = p^{Q \partial_Q}$. Then one find that $F(Q,p)$ satisfies 
\begin{align}
	\left[ \sigma_p^l - \sigma_p^2 +2Q - Q^2 \right] F(Q,p)=0.
\end{align}
With a new Newton polygon
\begin{center}
\begin{tikzpicture}
\draw [xstep=1, ystep=1, draw=gray](0,0)grid(3,3);
\draw [->, very thick](0,0)--(3,0);
\draw [->, very thick](0,0)--(0,3);
\draw [very thick, draw=red](0,0)--(2,0); 
\draw [very thick, draw=red](2,0)--(3,1); 
\end{tikzpicture}
\end{center}
the characteristic equation is 
\begin{align}
x^2(x^{l-2}-1)=0.
\end{align}
So each $(l-2)$-th root of unity $\zeta$, we could construct a solution of the form
\begin{align}
e_{p,\zeta}(Q) \sum_{d=0}^\infty f_{d}(\zeta,p) Q^d, \ {\rm with} \quad f_0(\zeta,p)=1.	
\end{align}
Then we obtain a relation of the coefficient $f_{d}(\zeta,p)$ as follows,
\begin{align}
	\sum_{d \geq 0} \left[ f_d(\zeta,p)\cdot \zeta^2(p^{ld}-p^{2d})Q^d + 2f_{d}(\zeta,p) Q^{d+1} - f_{d}(\zeta,p)Q^{d+2} \right] = 0.
\end{align}
Thus
\begin{align}
	\zeta^2(p^{ld}-p^{2d})f_d(\zeta,p) = 2f_{d-1}(\zeta,p) - f_{d-2}(\zeta,p), \quad d \geq 1, 
\end{align}
where we set $f_{-1}(\zeta,p) = 0$. Thus we construct $l$ solutions for the difference equation (\ref{dif-l>2}).

\subsection{Difference equation for quintic 3-fold}
As we introduced in the introduction, the modified $I$-function of quintic is 
\begin{align}
\widetilde{I^K_X}(z,q)= P^{\ell_q(z)}\sum_{d=0}^{\infty} \frac{\prod_{k=1}^{5d}(1-P^5q^k)}{\prod_{k=1}^{d}(1-Pq^k)^5} z^d,
\end{align}
satisfying the following difference equation of degree 25:
\begin{align} 
\left[	(1-\sigma_q)^5 - z \prod_{k=1}^5 (1-q^{k}\sigma_q^5) \right] \widetilde{I^K_X}(q,z)=0,     
\end{align}
whose associated Newton polygon is
\begin{center}
\begin{tikzpicture}
\draw [xstep=1, ystep=1, draw=gray](0,0)grid(5,2);
\draw [->, very thick](0,0)--(5,0);
\draw [->, very thick](0,0)--(0,2);
\draw [very thick, draw=red](0,1)--(4,0); 
\draw [very thick, draw=red](4,0)--(5,0); 
\end{tikzpicture}
\end{center}
There are two segments, one is of slope $\mu=-1/20$ and another one is horizontal. We could do the same as the last subsection, for the non-horizontal segment, we have

\begin{Proposition}[\cite{Wen}, Proposition 3.1.] Let $\xi$ be the 20th root of unity and $p$ be $q^{1/20}$. Setting $Q=z^{1/20}$ and $\sigma_p=p^{Q\partial_Q}$. There are 20 solutions associated to the segment of slope $\mu=-1/20$ of the form
\begin{align}
	e_{p,\xi p^{-1/2}}F(Q) = e_{p,\xi p^{-1/2}}\sum_{n \geq 0}f_nQ^n 
\end{align}
where $F(Q)$ satisfies
\begin{align}
	&\left[ (z-\xi p^{-\frac{9}{2}}\sigma_p)(z-\xi p^{-\frac{7}{2}}\sigma_p)(z-\xi p^{-\frac{5}{2}}\sigma_p)(z-\xi p^{-\frac{3}{2}}\sigma_p)(z-\xi p^{-\frac{1}{2}} \sigma_p)    \right.  \\
	&- \left.  (z^5-\xi^5p^{-\frac{25}{2}}\sigma_p^5)(z^5-\xi^5 p^{-\frac{15}{2}}\sigma_p^5)(z^5-\xi^5p^{-\frac{5}{2}}\sigma_p^5)(z^5-\xi^5p^{\frac{5}{2}}\sigma_p^5)(z^5-\xi^5p^{\frac{15}{2}}\sigma_p^5)   \right] F(Q)=0
\end{align}	
\end{Proposition}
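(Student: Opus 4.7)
The plan is to follow the general Newton-polygon recipe of Section 4.1 applied to the quintic equation. For the segment of slope $-1/20$, the prescribed substitution is
\[
f(z) \;=\; \theta_q^{-1/20}(z)\,e_{p,\xi p^{-1/2}}(Q)\,F(Q)
\]
with $Q = z^{1/20}$, $p = q^{1/20}$ and $\sigma_p = p^{Q\partial_Q}$. This two-step substitution is the quintic analogue of the one carried out for $\mathbb{P}^1$ with level structure in subsection \ref{sub-proj-irregular}: first $\theta_q^{-1/20}$ flattens the slope, then the $q$-character $e_{p,\xi p^{-1/2}}$ absorbs one of the twenty characteristic exponents of the resulting horizontal-dominated equation.

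First I would compute the action of $\sigma_q^j$ on this ansatz. The necessary ingredients are $\sigma_q^j \theta_q^{-1/20}(z) = p^{-j(j-1)/2}Q^{-j}\theta_q^{-1/20}(z)$ (by induction from $\sigma_q \theta_q = z\theta_q$), the eigenvector relation $\sigma_q e_{p,\lambda}(Q) = \lambda e_{p,\lambda}(Q)$, and the commutation $\sigma_p Q^{-1} = p^{-1}Q^{-1}\sigma_p$ on the enlarged function space in $Q$; together these give $\sigma_q^j f = \xi^j p^{-j^2/2}Q^{-j}\theta_q^{-1/20}e_{p,\xi p^{-1/2}}\sigma_p^j F$. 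Substituting into the quintic equation and cancelling the common prefactor $\theta_q^{-1/20}e_{p,\xi p^{-1/2}}$ produces a twenty-fifth-order $q$-difference equation for $F(Q)$ in $\sigma_p$ and $Q$.

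To put this reduced equation into the product form stated in the proposition I would then use the identity
\[
Q^n\bigl(1 - \xi p^{-1/2}Q^{-1}\sigma_p\bigr)^n \;=\; \prod_{k=0}^{n-1}\bigl(Q - \xi p^{-(2k+1)/2}\sigma_p\bigr),
\]
proved by induction on $n$ using $\sigma_p Q = pQ\sigma_p$; setting $n=5$ yields the first displayed product. A parallel manipulation applied to $Q^{25}\prod_{k=1}^5 (1-q^k\sigma_q^5)$, using that the five factors commute pairwise and that $\sigma_p^5 Q^5 = p^{25}Q^5\sigma_p^5$, expresses it as the five-factor degree-$5$ product appearing on the right of the stated equation. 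The twenty solutions are then produced by the Frobenius step of Section 4.1: the new Newton polygon has a horizontal segment whose characteristic polynomial is satisfied by every twentieth root of unity $\xi$ --- this is the reason for the specific choice $\lambda = \xi p^{-1/2}$ in the $e_{p,\lambda}$-factor --- and equating coefficients of each power of $Q$ gives a recursion that determines $f_n$ from the starting value $f_0 = 1$.

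The main obstacle is the non-commutative bookkeeping in the product factorisations: getting the precise half-integer $p$-exponents $-(2k-1)/2$ and $(10k-25)/2$ in the final form requires careful tracking of $\sigma_p^j Q^{-k} = p^{-jk}Q^{-k}\sigma_p^j$ through up to twenty-five compositions, together with the judicious reordering of the five pairwise-commuting degree-five factors on the right-hand side so that their $p$-exponents come out symmetric. This is purely algebraic and introduces no conceptual difficulty beyond what was already seen in subsection \ref{sub-proj-irregular}.
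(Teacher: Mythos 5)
Your overall strategy coincides with the paper's: the text gives no proof of its own, only the remark that one should ``do the same as the last subsection'' (the Newton-polygon recipe of Section 4.1, as carried out for $\mathbb{P}^1$ with level structure) together with the citation to \cite{Wen}, and your proposal is that recipe fleshed out. Your central computation is also correct: setting $U:=\xi p^{-1/2}Q^{-1}\sigma_p$ one has $U^j=\xi^j p^{-j^2/2}Q^{-j}\sigma_p^j$, so $\sigma_q^j$ applied to the ansatz yields $\theta_q^{-1/20}e_{p,\xi p^{-1/2}}\,U^jF$, and the quintic equation reduces to $\bigl[(1-U)^5-Q^{20}\prod_{k=1}^5(1-p^{20k}U^5)\bigr]F=0$, which upon left-multiplication by $Q^5$ is the displayed operator.

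There are, however, two concrete errors in your factorisation step, and they touch exactly the content of the proposition's displayed formula. First, the identity $Q^n(1-U)^n=\prod_{k=0}^{n-1}\bigl(Q-\xi p^{-(2k+1)/2}\sigma_p\bigr)$ is false for the left-to-right order in which you wrote the product: already for $n=2$ one computes $Q^2(1-U)^2=Q^2-2\xi p^{-1/2}Q\sigma_p+\xi^2p^{-2}\sigma_p^2$, which equals $\bigl(Q-\xi p^{-3/2}\sigma_p\bigr)\bigl(Q-\xi p^{-1/2}\sigma_p\bigr)$ but \emph{not} $\bigl(Q-\xi p^{-1/2}\sigma_p\bigr)\bigl(Q-\xi p^{-3/2}\sigma_p\bigr)$, since $\sigma_pQ=pQ\sigma_p$. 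The factors must be arranged with the most negative $p$-exponent on the far left, which is precisely the order appearing in the statement. Second, your claim that the five degree-five factors on the right ``commute pairwise'' is wrong: $\bigl(Q^5-a\sigma_p^5\bigr)\bigl(Q^5-b\sigma_p^5\bigr)=\bigl(Q^5-b\sigma_p^5\bigr)\bigl(Q^5-a\sigma_p^5\bigr)$ forces $(a-b)(p^{25}-1)=0$, so no ``judicious reordering'' is available there either; a single correct ordering must be exhibited and verified. Both points are repairable by the same direct expansion, but as written the proof does not yet establish the stated operator identity.
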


\begin{Remark}
In this notes, we require $|q|>1$. However in \cite{Wen}, in order to do the analytic continuation, it needs to require $|q|<1$ for the convergence reason. And see \cite{GS21} for additional discussion on q-deformed Picard-Fuchs equation and Frobenius method for quintic 3-fold.
\end{Remark}

\subsection{Convergent solutions for irregular cases}
In this subsection, we prove convergence of certain solutions in irregular singular cases. Here we follow \cite{Adams, Adams2, HSS16}.

Let $P=a_0 + \ldots + a_n \sigma_q^n$ be the standard form, suppose $a_0a_n \neq 0$ and at least one $a_i(0)$ is nonzero. Setting
\begin{align}
a_k(z)= \sum_{j=0}^\infty a_{kj} z^j,	
\end{align}
and
\begin{align}
P_j (\sigma_q) = \sum_{k=0}^n a_{kj} \cdot \sigma_q^k.	
\end{align}
Then we have

\begin{Lemma}[\cite{Adams} Chapter 3, p. 202.]
Assume that the lowest slope of $P$ 	is 0 and $P_0(1)=0$, $P_0(q^k) \neq 0$ for $ \forall \ k \geq 1 $. Then the unique formal solution of 
\begin{align}
	P \cdot f(z,q) = \left[ a_0 + \ldots + a_n \sigma_q^n \right]f(z,q)=0
\end{align} 
in the form 
\begin{align}
f =1 + f_1 z + \ldots \in \mathbb{C}[\![ z]\!]	
\end{align}
converges.
\end{Lemma}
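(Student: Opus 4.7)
The plan is to substitute the formal ansatz $f(z) = 1 + f_1 z + f_2 z^2 + \cdots$ into $P \cdot f = 0$, extract a recursion for the coefficients $f_m$, and then show that these coefficients grow at most geometrically so that the resulting series converges on a disk about the origin. Expanding $a_k(z) \sigma_q^k f = \sum_{j \geq 0} a_{k,j} z^j \sum_{m \geq 0} q^{km} f_m z^m$ and collecting the coefficient of $z^m$ gives
\[
\sum_{l=0}^{m} f_l \cdot P_{m-l}(q^l) = 0.
\]
For $m=0$ this reduces to $f_0 P_0(1) = 0$, which is consistent with the normalization $f_0 = 1$ because of the hypothesis $P_0(1) = 0$. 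For $m \geq 1$, the non-resonance assumption $P_0(q^m) \neq 0$ allows us to solve
\[
f_m = -\frac{1}{P_0(q^m)} \sum_{l=0}^{m-1} f_l \cdot P_{m-l}(q^l),
\]
which determines $(f_m)_{m \geq 1}$ uniquely, giving existence and uniqueness of the formal series solution.

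The next step I would take is to use the hypothesis ``lowest slope of $P$ is $0$'' to pin down $\deg P_0$. Combined with the standing assumption that some $a_i(0)$ is nonzero, it forces $a_n(0) \neq 0$: the leftmost vertex of the Newton polygon is $(0, \mathrm{val}_0(a_n))$, so if the leftmost segment were horizontal at a strictly positive height, convexity would give $\mathrm{val}_0(a_k) > 0$ for every $k$, contradicting the assumption. Hence $\deg P_0 = n$, and this is the structural fact that will make the subsequent estimates close.

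For convergence I would fix $\rho > 0$ smaller than the common radius of analyticity of the coefficients $a_k(z)$, so that $|a_{k,j}| \leq M \rho^{-j}$ for a uniform constant $M$. A direct estimate then yields $|P_{m-l}(q^l)| \leq C_1 \rho^{-(m-l)} |q|^{nl}$, while $\deg P_0 = n$ together with $a_{n,0} \neq 0$ forces $|P_0(q^m)| \geq c\,|q|^{nm}$ for all $m$ large enough. Substituting into the recursion and setting $u_m := (\rho |q|^n)^m |f_m|$ collapses the bound to a Gronwall-type inequality
\[
u_m \leq C' \sum_{l=0}^{m-1} u_l,
\]
from which a routine induction on $S_m = \sum_{l \leq m} u_l$ gives $u_m \leq u_0 (1+C')^m$. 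Translating back, $|f_m| \leq \bigl[(1+C')/(\rho |q|^n)\bigr]^m$, so $\sum f_m z^m$ has positive radius of convergence.

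The main obstacle I anticipate is calibrating the correct rescaling $u_m = (\rho|q|^n)^m |f_m|$ so that the Gronwall iteration closes; without the identity $\deg P_0 = n$, an extra factor $|q|^{(n-\deg P_0)m}$ would survive on the right-hand side and destroy the geometric control. The real content of the lemma is therefore that the ``lowest slope $0$'' hypothesis, via the Newton polygon, provides exactly the balance between the growth rates of $P_0(q^m)$ and of the shift coefficients $P_{m-l}(q^l)$ that is needed for the estimate to succeed.
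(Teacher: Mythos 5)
Your proposal is correct and follows essentially the same route as the paper: derive the recursion $f_m = -P_0(q^m)^{-1}\sum_{l<m} f_l P_{m-l}(q^l)$, bound $|P_0(q^m)|$ below by a constant times $|q|^{nm}$ and $|P_j(q^i)|$ above by $C\rho^{-j}|q|^{ni}$, rescale by $(\rho|q|^n)^m$, and close a Gronwall-type iteration. Your version is in fact slightly more complete, since you make explicit the deduction $a_{n,0}\neq 0$ from the lowest-slope-zero hypothesis (which the paper uses tacitly to get $|P_0(q^k)|\geq A|q|^{kn}$) and you carry out the induction to the correct geometric bound $u_m\leq u_0(1+C')^m$ where the paper's stated bound $|f_m|\leq mD|f_0|/(A(R|q|^n)^m)$ is loose.
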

\begin{proof}
	It almost follows the procedure in section \ref{reg-convergence}, here we gives a brief proof. There exist a constant $A > 0$ such that
\begin{align}
 |P_0(q^k)| \geq A |q |^{kn},\quad \forall k \geq 1,.
\end{align}
Let $R$ be strictly bounded above by the radius of convergence of $a_0(z), \ldots, a_n(z)$. Then there is a $B > 0$ such that
\begin{align}
\forall i \in \{0, \ldots, n \}, \ \forall j \geq 0, \ |a_{ij}| \leq BR^{-j}	.
\end{align}
Then 
\begin{align}
	|P_j(q^i)| \leq DR^{-j} |q|^{ni}.
\end{align}
From the recursive relation, we have for $m \geq 1$
\begin{align}
|f_m| \leq \frac{D}{A} \sum_{i=0}^{m-1} \frac{R^{-(m-i)}|q|^{ni}}{|q|^{nm}} |f_i|,	
\end{align}
quotient by $(R^{-1}|q|^{-n})^m$ on both side, we obtain
\begin{align}
\frac{|f_m|}{(R^{-1}|q|^{-n})^m} \leq \frac{D}{A} \sum_{i=0}^{m-1} \frac{|f_i|}{(R^{-1}|q|^{-n})^i},	
\end{align}
i.e.,
\begin{align}
|f_m| \leq  \frac{mD}{A (R|q|^n)^m} |f_0|.	
\end{align}
The convergence of $f$ follows.
\end{proof}

\begin{Remark} 
\begin{itemize}
\item The above argument fails if $0$ is not	 the lowest slope of $P$. For example, one could check our Example \ref{slope=-1} and \ref{slope=-2} that the solutions associate with horizontal segment do not converge.
\item If the lowest slope of $P$ is $\mu \in \mathbb{Z}_{<0} $, then there is a solution of the form
\begin{align}
 \theta_q^{\mu}(z) F(z,q)= \theta_q^{\mu}(z)\sum_{d=0}^\infty f_d(q) \cdot z^d	,
\end{align}
and $F(z,q)$ is analytic at the origin. The proof is almost the same, since the prefactor $\theta_q^{\mu}(z)$ will make the lowest slope of the new difference equation for $F(z,q)$ to be 0.
\end{itemize}
\end{Remark}


\end{document}